\numberwithin{equation}{section}
\newtheorem{theorem}{Theorem}[section]
\newtheorem{lemma}[theorem]{Lemma}
\newtheorem{proposition}[theorem]{Proposition}
\newtheorem*{proposition*}{Proposition}
\newtheorem*{theorem*}{Theorem}
\newtheorem*{corollary*}{Corollary}
\newtheorem*{lemma*}{Lemma}
\newtheorem{corollary}[theorem]{Corollary}
\newtheorem{definition}[theorem]{Definition}
\theoremstyle{remark}
\newtheorem{remark}[theorem]{Remark}
\providecommand{\customgenericname}{}
\newcommand{\newcustomtheorem}[2]{%
  \newenvironment{#1}[1]
  {%
   \renewcommand\customgenericname{#2}%
   \renewcommand\theinnercustomgeneric{##1}%
   \innercustomgeneric
  }
  {\endinnercustomgeneric}
}
\newtheoremstyle{TheoremNum}
        {\topsep}{\topsep}              
        {\itshape}                      
        {}                              
        {\bfseries}                     
        {.}                             
        { }                             
        {\thmname{#1}\thmnote{ \bfseries #3}}
    \theoremstyle{TheoremNum}
\renewcommand{\Im}{\mathrm{Im}}
\DeclareMathOperator{\Ker}{Ker}
\newcommand{\Spec}{\mathrm{Spec}}
\newcommand{\End}{\mathrm{End}}
\newcommand{\PEnd}{\mathrm{ParEnd}}
\newcommand{\SPEnd}{\mathrm{SParEnd}}
\newcommand{\ad}{\mathrm{ad}}
\newcommand{\gr}{\mathrm{gr}}
\newcommand{\rk}{\mathrm{rk}}
\newcommand{\GL}{\mathrm{GL}}
\newcommand{\parmu}{\mathrm{par-}\mu}
\newcommand{\pardeg}{\mathrm{par-}\deg}
\newcommand{\Ee}{\mathcal{E}}
\newcommand{\Ff}{\mathcal{F}}
\newcommand{\Mm}{\mathcal{M}}
\newcommand{\Nn}{\mathcal{N}}
\newcommand{\Oo}{\mathcal{O}}
\newcommand{\Pp}{\mathcal{P}}
\newcommand{\Qq}{\mathcal{Q}}
\newcommand{\hM}{\widehat{\M(\Pp,\alpha)}}
\newcommand{\hMn}{\widehat{\M(\Pp,\alpha)}_{nilp}}
\newcommand{\hMs}{\widehat{\M(\Pp,\alpha)}_{st}}
\newcommand{\Mn}{{\M}(\Pp,\alpha)_{nilp}}
\newcommand{\Ms}{{\M}(\Pp,\alpha)_{st}}
\newcommand{\MO}{{\M}(\Pp,\alpha)_{\Oo}}
\newcommand{\VE}{\V_{\Ee}}
\newcommand{\Vn}{\V_{\Ee,nilp}}
\newcommand{\VO}{\V_{\Ee,\Oo}}
\newcommand{\Vst}{\V_{\Ee,st}}
\newcommand{\oVE}{\ol{\V}_{\Ee}}
\newcommand{\oVn}{\ol{\V}_{\Ee,nilp}}
\newcommand{\Un}{\mathbf{Un}}
\newcommand{\Unn}{\mathbf{Un}_{nilp}}
\newcommand{\Uns}{\mathbf{Un}_{st}}
\newcommand{\Sh}{\mathbf{S}}
\newcommand{\Shn}{\mathbf{S}_{nilp}}
\newcommand{\Shs}{\mathbf{S}_{st}}
\newcommand{\hrn}{\hat{r}_n}
\newcommand{\hrs}{\hat{r}_s}
\newcommand{\Ex}{\mathbf{Ex}}
\newcommand{\Wo}{\mathbf{W}}
\newcommand{\C}{\mathrm{C}}
\renewcommand{\H}{\mathrm{H}}
\newcommand{\M}{\mathrm{M}}
\newcommand{\N}{\mathrm{N}}
\newcommand{\V}{\mathrm{V}}
\newcommand{\ol}[1]{\overline{#1}}
\renewcommand{\AA}{\mathbb{A}}
\newcommand{\CC}{\mathbb{C}}
\newcommand{\HH}{\mathbb{H}}
\newcommand{\PP}{\mathbb{P}}
\newcommand{\projects}{\twoheadrightarrow}
\newcommand{\plonge}{\hookrightarrow}
\newcommand\Quotient[2]{
        \mathchoice
            {
                \text{\raise1ex\hbox{\thinspace $#1$}\Big{/} \lower1ex\hbox{$#2$} \thinspace}%
            }
            {
                #1\,/\,#2
            }
            {
                #1\,/\,#2
            }
            {
                #1\,/\,#2
            }
    }
\newcommand\GIT[2]{
        \mathchoice
            {
                \text{\raise1ex\hbox{\thinspace $#1$}\Big{/}\!\!\!\!\Big{/} \lower1ex\hbox{$#2$} \thinspace}%
            }
            {
                #1\,/\,#2
            }
            {
                #1\,/\,#2
            }
            {
                #1\,/\,#2
     a       }
    }
\title[Equality of the wobbly and shaky loci]{Equality of the wobbly and shaky loci}
\author{Ana Pe\'on-Nieto}
\address{Ana Pe\'on-Nieto	\newline\indent
	Department of Mathematics \newline\indent
 Universidade de Santiago de Compostela \newline\indent
 Spain \newline\indent
 and\newline\indent
 School of Mathematics \newline\indent University of Birmingham\newline\indent Watson Building, Edgebaston\newline\indent Birmingham B15 2TT, UK}
\email{ana.peon@usc.es, 
	a.peon-nieto@bham.ac.uk}
\thanks{The author was supported by the Beatriu de Pin\'os grant n. 2018 BP 332, under the scheme H2020-MSCA-COFUND-2017 Agreement n. 801370, the project GoH funded by the scheme H2020-MSCA-IF-2019,  Agreement n.	897722, and the AEI Consolidaci\'on Investigadora grant no. CNS2022-136042.}
\subjclass[2000]{Primary 14H60, 14H70}
\begin{document}
 
\maketitle
\begin{abstract}
Let $X$ be a smooth complex projective curve of genus $g\geq 2$, and let $D\subset X$ be a reduced divisor. 
We prove that a parabolic vector bundle
$\Ee$ on $X$ is (strongly) wobbly, i.e.  $\Ee$ has a non-zero (strongly) parabolic nilpotent Higgs field, if and only if it is (strongly)  shaky, i.e., it is in the image of the exceptional divisor of a suitable resolution of the rational map from the (strongly) parabolic Higgs moduli to the parabolic bundle moduli space, both assumed to be smooth. This solves a conjecture by Donagi–Pantev \cite{DPLectures} in the parabolic and the vector bundle context. To this end, we prove the stability of strongly very stable parabolic bundles, and criteria for very stability of parabolic bundles.
\end{abstract}
\smallskip
{\small \noindent \textbf{Keywords.} Parabolic vector/Higgs bundle,  Hitchin map, wobbly bundle, shaky bundle, geometric Langlands, Hironaka resolution.}

\section{Introduction}
Let $X$ be a smooth complex projective curve of genus $g\geq 2$ and let $K$ be its canonical bundle. Consider the moduli spaces $\N(n,d)$ of vector bundles of rank $n$ and degree $d$, and  $\M(n,d)$ of Higgs bundles with the same invariants. These are schemes parametrising vector bundles (resp. Higgs bundles, namely pairs $(E,\varphi)$ where $E$ is a vector bundle and $\varphi\in H^0(X,\End(E)\otimes K)$ is the Higgs field) of fixed rank and degree. This article is concerned with two loci in $\N(n,d)$: the wobbly and the shaky loci. Wobbly bundles are semistable bundles in $\N(n,d)$ admitting a non-zero nilpotent Higgs field. Shaky bundles are more involved to define. To do this, consider the dense open embedding $T^*\N^s(n,d)\subset\M(n,d)$, where $\N^s(n,d)$ denotes the stable locus in $\N(n,d)$. This yields a surjective rational map $\xymatrix{r:\M(n,d)\ar@{-->}[r]&\N(n,d)}$, which is a morphism away from the locus $\Un\subset\M(n,d)$ of semistable Higgs bundles whose underlying bundle is unstable.  Suppose that the moduli space of bundles is smooth. Then, by Hironaka's theorem on the elimination of indeterminacies \cite{H}, a finite number of blowups allows to resolve $r$ to a morphism $\xymatrix{\hat{r}:\hat{\M}(n,d)\ar[r]&\N(n,d)}$. An element in the image  under $\hat{r}$ of the exceptional divisor  $\Ex$ is called a shaky bundle \cite{DPLectures}.

The name wobbly first appeared in \cite{DPLectures}, but the interest in these objects dates back to the works of Drinfeld and Laumon from the 80's \cite{Drinfeld,L}. The focus of the earlier works was on the non wobbly bundles, also known as very stable bundles. Very stable bundles are open dense in the moduli space of bundles (as in particular they are always stable). Hence, they play a crucial role in the geometry of the global nilpotent cone and, by extension,  that of the moduli space of Higgs bundles. Investigation has however more recently also concentrated on wobbly bundles, after the conjecture by Donagi--Pantev that these are precisely the shaky bundles \cite[\S 6.1]{DPLectures}.  
Shaky bundles appear naturally in their programme to prove geometric Langlands via abelianization of Higgs bundles and non abelian Hodge theory. In the words of the authors \cite[\S 6.1]{DPLectures} ``A major step towards carrying out our program is the identification of shaky bundles''. Indeed, their programme aims at constructing the Hecke eigensheaves predicted by geometric Langlands via simpler abelianized ones \cite{DPLanglands}. The nature of the construction itself implies that these eigensheaves have to be singular along the shaky divisor. This is the case of the sphere minus five points \cite{DPHecke}, where the Hecke eigensheaves are proven to have polar singularities along the shaky locus, as expected after Drinfeld and Laumon's work \cite{LaumonGLn}. The equality of the shaky and  wobbly loci \cite[Prop. 5.10]{DPHecke}, done by an explicit description of the objects involved, proves crucial in the identification of the singularities of the sheaves \cite[\S 6.3.2]{DPHecke}.   

For general smooth Riemann surfaces, the geometric criterion for very stability proven by  Pauly and the author  \cite{PP} suggests the equality of the wobbly and shaky locus. This criterion was subsequently generalised to principal bundles by H. Zelaci \cite{Hacen}. 

The above applies to usual Higgs bundles. However, since the beginning of their programme, it was understood by Donagi--Pantev that parabolic bundles were the right setup to prove geometric Langlands via Higgs bundles \cite[\S 1.3]{DPHecke}, \cite{DPS}. This is why we have chosen to concentrate on the parabolic setup in the present proof of the equality of wobbly and shaky bundles. In fact, the arguments in this paper can be adapted to prove the equality of wobbly and shaky bundles with no parabolic structure, but this result follows also as a corollary of the parabolic case hereby presented. Let us point out that unlike in the case of $\PP^1$ minus five points, there is no elementary explicit description of the objects involved, and thus the techniques hereby used are totally different in nature. As an intermediate step, we generalise the results in \cite{PP, Hacen} to this setup.

\subsection*{Structure of the paper and summary of results}
Let $D\subset X$ be a reduced divisor,  let $\mathcal{P}:=\{P_x:x\in D\}$ be a set of conjugacy classes of parabolic subgroups of $\GL(n,\C)$ indexed by $D$, identified with partitions $\{\ol{m}_x=(m_{x,1},\dots, m_{x,r_x}),x\in D\}$ of the rank $n$. A quasi-parabolic vector bundle of rank $n$ and flag type $\Pp$ is a pair $\Ee=(E,\sigma)$ where $E$ is a rank $n$ vector bundle and $\sigma$ is a reduction of the structure group to the given parabolic subgroup at each point of $D$. Equivalently, we may identify $\sigma$ with a set of flags $\{E_{x,0}=0\subseteq E_{x,1}\subsetneq\dots\subsetneq E_{x,r_x}=E_x\}_{x\in D}$. Let $\alpha_x=(\alpha_{x,1},\dots,\alpha_{x,r_x})$ be an increasing  $r_x$-uple of non-positive real numbers in the interval $(-1,0]$. The assignment of the weights $\alpha_{x,i}$ to the steps $E_{x,i}$ of the flag defines a parabolic structure on the quasi-parabolic bundle $\Ee$. A parabolic Higgs bundle is a pair $(\Ee,\varphi)$ where $\Ee$ is a parabolic vector bundle and $\varphi\in H^0(X,\PEnd(\Ee)\otimes K(D))$, with $\PEnd(\Ee)\subset\End(E)$ the subsheaf of endomorphism of $E$ preserving the flags at the prescribed points.

 Each $\alpha$ as above defines a stability notion, yielding  moduli spaces of parabolic bundles $\N(\Pp,\alpha)$ and parabolic Higgs bundles $\M(\Pp,\alpha)$. 
The latter is Poisson with symplectic leaves given by Higgs fields whose residues belong to fixed adjoint orbits in the associated Levi  group. Particularly important are the strongly parabolic leaf (corresponding to the zero orbit), and the regular semisimple leaves (which are generic). Strongly parabolic Higgs bundles have Higgs fields belonging to the  subsheaf $\SPEnd(\Ee)\subset\PEnd(\Ee)$ of strongly parabolic endomorphisms, i.e., endomorphism strictly compatible with the filtration.

Consider the Hitchin fibration
$$
     h_{\Pp,\alpha} : \M(\Pp,\alpha) \longrightarrow  \H_D = \bigoplus_{i=1}^n \H^0(X, K^i(iD)), 
$$ 
defined by associating to a parabolic Higgs bundle $(\Ee, \varphi)$ the coefficients of the characteristic polynomial of $\varphi$.  If $\Ee$ is a stable parabolic vector bundle of type $(\Pp,\alpha)$, there is an embedding \[\H^0(X,\PEnd(\Ee)\otimes K(D))\hookrightarrow\M(\Pp,\alpha)\] given by $\Ee\mapsto(\Ee,0)$.  Motivated by \cite{PP,Hacen}, we consider the restriction $h_{\Ee,st}$ (respectively $h_{\Ee,nilp}$) of the Hitchin map to the vector space  $V_{\Ee, st}=\H^0(X,\SPEnd(\Ee)\otimes K(D))$ of strongly parabolic Higgs fields (resp.   $V_{\Ee, nilp}=\H^0(X,\PEnd(\Ee)\otimes K(D))_{nilp}$, of Higgs fields with nilpotent residue with respect to the parabolic subgroup given by the flag). If $\Ee$ is stable, this corresponds to the intersection of $\H^0(X,\PEnd(\Ee)\otimes K(D))$ with the strongly parabolic symplectic leaf (resp., the closure of the regular nilpotent symplectic leaf). These and other notions are explained in Section \ref{sec:prel}. 

In Section \ref{sec:criteria} we define the notion of a strongly very stable parabolic  bundle (respectively, very stable), that is, a parabolic bundle admitting no non-zero strongly parabolic Higgs field (resp. no non-zero  nilpotent parabolic Higgs field). Paralleling the case of vector bundles \cite{L},  we prove:
\begin{lemma*}[Lemma \ref{lm:vs_is_stable}]
  Let $\Ee$ be parabolic of type $(\Pp,\alpha)$. If it is strongly very stable, then it is $\alpha$-stable. In particular, if it is very stable, then it is $\alpha$-stable.    
\end{lemma*}
We note that this fact is implicit in the assertion in \cite[\S5.4]{DPHecke} that all results from \cite{L} adapt to the strongly parabolic setup in virtue of \cite{BiswasOrbi,Borne}. We hereby include a detailed proof of the aforementioned lemma, improving a result by \cite{SWW} (see Remark \ref{rk:SWW}). 

Similarly, the following result extends \cite[Theorem 1]{PP} to the parabolic setup.
 \begin{theorem*}[Theorems \ref{thm:st_vs_iff_proper} and \ref{thm:vs_iff_proper}]
Let $\Ee$ be a stable parabolic bundle, and let $h_{\Ee,nilp}$ (respectively $h_{\Ee,st}$) be the restriction of the Hitchin map to $\Vn$ (resp. $\Vst$). Then, 
$$
\begin{array}{lcl}
\Ee \textnormal{ \it is  (resp.\  strongly) very stable } &\iff&  h_{\Ee,nilp} \,(resp.\  h_{\Ee,st})\textnormal{ \it is finite }\\
&\iff& h_{\Ee,nilp}\,(resp.\  h_{\Ee,st}) \textnormal{ \it is quasi-finite } \\
&\iff&  h_{\Ee,nilp} \,(resp.\  h_{\Ee,st})\textnormal{ \it is proper }\\
&\iff&  \Vn\,\plonge \M(\Pp,\alpha)  \textnormal{ \it is proper}\\
&&(resp.\  \Vst\plonge \M(\Pp,\alpha) \textnormal{ \it is proper}).
\end{array}
$$
\end{theorem*} 
It is worth noting that  finiteness follows from very stability as a corollary of \cite[Lemma 1.3]{Hacen}, of which we provide an alternative elementary proof. Moreover, in the case of nilpotent Higgs fields, properness of $h_{\Ee,nilp}$ is equivalent to properness of $h_{\Ee}:=h_{\Pp,\alpha}|_{\VE}$ (cf. Theorem \ref{thm:vs_iff_proper}). 

An immediate corollary is the fact that  
if $\Ee$ is strongly very stable, then $V_{\Ee,st}$ is a Lagrangian multisection of the restriction  $h_{\Pp,\alpha, st}$ of the Hitchin map (Corollary \ref{cor:lagrangian_multi})  to the strongly parabolic leaf. This was shown in the usual setup in \cite[Corollary 7.3]{FGOP}. A similar result holds for $h_{\Ee,nilp}$, except that in that case it will be Lagrangian in a symplectic leaf possibly of lower dimension than the regular nilpotent (Corollary \ref{cor:multisection_nilp}). Finally, Lemma \ref{lm:wobbly} characterises the strong wobbly locus in terms of intersections of irreducible components of the nilpotent cone in the moduli space of strongly parabolic Higgs bundles with the moduli space of parabolic bundles.

Section \ref{sec:W=S} addresses the identification of wobbly and shaky bundles. In this setup, wobbliness can be defined with respect to the full moduli space or  different symplectic nilpotent leaves. We consider strongly wobbly parabolic bundles (that is, the complement of the strongly very stable locus in the moduli space) and wobbly parabolic bundles (namely, the complement to the very stable locus). Regarding the corresponding shaky loci, let $\hat{r}$ be  a resolution of $r$ obtained by blowup along the locus $\Un$ of stable parabolic Higgs bundles with underlying unstable parabolic bundle (say, when the moduli space is regular). Let  $\Ex$ be the exceptional divisor, and let $\Sh=\hat{r}(\Ex)$. By construction, $\hat{r}$ restricted to $\Mn$ and $\Ms$ is again a succession of blowups, and we denote by $\Shn$ and $\Sh_{st}$ the images of the exceptional divisors. Then:
 \begin{theorem*}[Theorem \ref{thm:W=S}]
 There is an equality $\Wo=\Shn=\Sh$, where $\Wo\subset\N(\Pp,\alpha)$ denotes the wobbly locus.
 
 Similarly, the strong wobbly locus $\Wo_{st}$ satisfies $\Wo_{st}=\Sh_{st}$.
 \end{theorem*}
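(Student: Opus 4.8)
The plan is to establish the cycle of inclusions $\Wo\subseteq\Shn\subseteq\Sh\subseteq\Wo$, together with its strongly parabolic analogue $\Wo_{st}\subseteq\Sh_{st}\subseteq\Wo_{st}$; by Corollary \ref{cor:W=S} this is all that is needed. Two structural facts, set up earlier in this section, will be used throughout: first, $\hr$ extends the rational map $r$, which on the open locus $\M(\Pp,\alpha)\setminus\Un$ of Higgs bundles with stable underlying bundle coincides with the morphism sending $(E,\varphi)$ to its underlying parabolic bundle; second, the blow-up resolving $r$ restricts over $\Mn$ and over $\Ms$ to successions of blow-ups along $\CC^{*}$-invariant centres supported on $\Un\cap\Mn$, respectively $\Un\cap\Ms$, so that the corresponding exceptional divisors sit inside $\Ex$. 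The inclusions $\Shn\subseteq\Sh$ and $\Sh_{st}\subseteq\Sh$ are then immediate, and the theorem follows once $\Wo\subseteq\Shn$ and $\Sh\subseteq\Wo$ (and their strong counterparts) are proved.

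For $\Wo\subseteq\Shn$, let $\Ee$ be wobbly. By Proposition \ref{prop:vs_iff_proper} the embedding $\Vn\plonge\M(\Pp,\alpha)$ is not proper; as $\Vn$ is locally closed in $\M(\Pp,\alpha)$ with closure contained in $\Mn$, this forces the boundary $\partial\Vn:=\ol{\Vn}\setminus\Vn$ to be nonempty. I would first check that $\partial\Vn\subseteq\Un\cap\Mn$: a boundary point $(E',\varphi')$ with $E'$ stable would lie in the domain of the continuous map $r$, so that $E'=r(E',\varphi')=\lim r(\Ee,\varphi_{s})=\Ee$ along an approaching sequence $\varphi_{s}\in\Vn$, whence $(E',\varphi')\in\Vn$ (nilpotency of the residue being closed) --- a contradiction. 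Next I would pick an irreducible component $Z$ of $\ol{\Vn}$ meeting $\partial\Vn$; since $Z\cap\Vn$ is dense open in $Z$ and $Z$ meets but is not contained in the blown-up locus $\Un\cap\Mn$, the strict transform $\wt Z\subset\hMn$ meets the exceptional divisor. Finally, on $Z\cap\Vn$ the map $\hr$ equals $r$, which is constantly $\Ee$ there; by irreducibility $\hr\equiv\Ee$ on $\wt Z$, in particular on $\wt Z\cap\Ex$, so $\Ee\in\Shn$. The strongly parabolic case is word for word the same, with $\Vn,\Mn,\hMn$ replaced by $\Vst,\Ms,\hMs$ and Proposition \ref{prop:st_vs_iff_proper} in place of Proposition \ref{prop:vs_iff_proper}.

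For $\Sh\subseteq\Wo$ --- the substantive direction --- I would adapt the degeneration argument of \cite{PP} and \cite[\S4.1]{BNR}. Write $\Ee=\hr(p)$ with $p\in\Ex$ lying over some $(E',\varphi')\in\Un$, $E'$ unstable, and choose a smooth pointed curve $(C,0)$ with a map $\gamma\colon C\to\hM$ such that $\gamma(0)=p$ and $\gamma(C\setminus\{0\})$ avoids $\Ex$. Then $\gamma(s)=(E_{s},\varphi_{s})$ has $E_{s}$ stable for $s\neq0$, $(E_{s},\varphi_{s})\to(E',\varphi')$ in $\M(\Pp,\alpha)$, and $E_{s}=r(\gamma(s))=\hr(\gamma(s))\to\hr(p)=\Ee$ in $\N(\Pp,\alpha)$. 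After a finite base change I would spread $\{E_{s}\}$ out to a family $\mathcal E$ of parabolic bundles over $X\times C$ with $\mathcal E_{0}\cong\Ee$, regard the $\varphi_{s}$ as a rational section $\Phi$ of $\PEnd(\mathcal E)\otimes K(D)$, regular off $X\times\{0\}$, and let $k\geq0$ be its pole order along $X\times\{0\}$. If $k=0$ then $(\Ee,\Phi|_{X\times\{0\}})=\lim(E_{s},\varphi_{s})=(E',\varphi')$ in the separated space $\M(\Pp,\alpha)$, forcing $\Ee\cong E'$, which is impossible; hence $k\geq1$. Minimality of $k$ makes $\psi:=(s^{k}\Phi)|_{X\times\{0\}}$ a nonzero parabolic Higgs field on $\Ee$, and its characteristic polynomial is the limit as $s\to0$ of the rescaled characteristic polynomial of $(E_{s},\varphi_{s})$, hence vanishes; so $\psi$ is a nonzero nilpotent parabolic Higgs field and $\Ee$ is wobbly. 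In the strongly parabolic setting the $\varphi_{s}$ are strongly parabolic, a closed condition inherited by $\psi$, so $\psi$ is a nonzero strongly parabolic Higgs field and $\Ee\in\Wo_{st}$.

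The main obstacle is this last paragraph: guaranteeing that $\psi$ is simultaneously nonzero and globally nilpotent --- and, in the strong case, strongly parabolic with values in the right symplectic leaf --- is the delicate point, resting on the pole-order bookkeeping and on the separatedness of $\M(\Pp,\alpha)$ exactly as in \cite[\S4.1]{BNR} and \cite{PP}. The only genuinely new feature over \emph{loc. cit.} is the presence of several nilpotent symplectic leaves and the ensuing need to keep track of $\Mn$, its blow-up $\hMn$ and the associated locus $\Shn$ throughout; this is precisely what the preliminary observation that the resolution restricts to successions of blow-ups on $\Mn$ and $\Ms$ is designed to make legitimate.
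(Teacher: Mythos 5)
Your proposal is correct, and for the inclusion $\Wo\subseteq\Shn$ it is essentially the paper's argument: the paper runs the same logic through a discrete valuation ring and the valuative criterion (lifting $\Spec(R)\to\Mn$ to $\hMn$ by properness of $\pi_n$, then using separatedness of $\N(\Pp,\alpha)$ to force $\hrn$ to send the limit point to $\Ee$), whereas you phrase it via the strict transform of $\ol{\Vn}$ and constancy of $\hr$ on a dense subset; these are interchangeable, and your reduction of ``boundary points have unstable underlying bundle'' is exactly Lemma \ref{lm:oVn_minus_Vn}\eqref{it:F_uns}. Where you genuinely diverge is the substantive inclusion $\Sh\subseteq\Wo$. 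The paper does \emph{not} produce the nilpotent Higgs field on $\Ee$ directly: it forms the rational map $m:\CC\times Y\dashrightarrow\M(\Pp,\alpha)$, $(t,y)\mapsto t\cdot\pi\circ j(y)$, resolves it, and obtains inside the nilpotent cone a chain of rational curves joining $(\Ee,0)$ to a point $(\Ff_0,\psi_0)\in\Un$; it then argues that an irreducible component $I$ of this chain meeting both $\N(\Pp,\alpha)$ and its complement can only meet $\N(\Pp,\alpha)$ inside $\Wo$, using Laumon's density of (strongly) very stable bundles. Your route is the BNR-style degeneration: spread the underlying bundles out to a family over a pointed curve with central fibre $\Ee$, measure the pole order $k$ of the Higgs fields along the central fibre, rule out $k\leq 0$ by separatedness of $\M(\Pp,\alpha)$, and read off a nonzero nilpotent (resp. strongly parabolic) field $\psi=(s^k\Phi)|_{X\times\{0\}}$ from the vanishing of the rescaled characteristic coefficients. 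This is more constructive and handles the strong case by pure closedness of the strong parabolicity condition, at the cost of needing the \'etale-local universal family on $\N(\Pp,\alpha)^s$ and a coherent assembly of the $\varphi_s$ into a rational section of $\PEnd(\mathcal{E})\otimes K(D)$ --- precisely the kind of local-family step whose misuse the paper itself corrects in the remark following Lemma \ref{lm:oVn_minus_Vn}; you apply it at the stable point $\Ee$ (where Section \ref{sec:univ_bundles} guarantees the family exists), so your argument avoids that pitfall, but you should state explicitly that the family is pulled back along the classifying map to $\N(\Pp,\alpha)^s$ and not constructed near $(E',\varphi')\in\Un$. The paper's chain-of-rational-curves argument buys independence from any choice of family and trivialization; yours buys an explicit witness of wobbliness and a one-line treatment of the strongly parabolic case.
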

 This was proven in  \cite{DPHecke} for rank two parabolic bundles on the projective line minus five points. In this case, strongly parabolic bundles and nilpotent parabolic bundles coincide, so there is just one wobbly locus to consider. Let me point out that  these authors work on the non smooth case, so shakiness is defined with respect to a specific point of the Hitchin base. They prove that the definition is independent of this choice.
 
	In the vector bundle setup, shakiness of wobbly bundles has been known since \cite{L, BNR}, and follows from similar arguments to those of \cite[\S 4.1]{BNR}. The proof of this fact can be adapted to the parabolic setup, with a bit of extra work for nilpotent parabolic  Higgs bundles given the existence of different leaves and images of the Hitchin map. This is achieved in Theorem \ref{thm:vs_iff_proper} (see the proof of Theorem \ref{thm:W=S}).

 Finally, Section \ref{sec:classical} explains how our results yield analogous statements in the non-parabolic case. Motivated by Brill--Noether theory, we introduce the notion of $(P,D)$-strong very stability on vector bundles for a parabolic group $P\leq \GL_{n}(\CC)$ and a reduced divisor $D$. For $P=\GL(n,\CC)$ and arbitrary non empty $D$, this recovers the notion of very stability defined by Drinfeld \cite{L}. Using the results in Sections \ref{sec:criteria} and \ref{sec:W=S} we recover the criteria from \cite{PP}, stability of very stable bundles \cite{L} and prove the equality of the usual wobbly and shaky bundles.
 
\subsubsection*{Acknowledgments}
The author would like to thank D. Alfaya, R. Donagi, P.B. Gothen, T. Hausel, J. Martens,   T. Pantev, C. Pauly, C. Simpson,  O. Villamayor, A. Zamora and H. Zelaci for useful discussions on these and related questions.  Many thanks to J.C. D\'iaz-Ramos, M. Dom\'inguez-V\'azquez and E. Garc\'ia-R\'io for their hospitality during the author's stay at Universidade de Santiago de Compostela in 2020, where a part of this article was written. Finally, thanks to the anonymous referees for their detailed comments.

\bigskip

\section{Parabolic Higgs bundles}\label{sec:prel}
We gather in this section some results that will be useful.

\subsection{Parabolic bundles and parabolic Higgs bundles}
Let $D\subset X$ be a reduced divisor of degree $d$.  
Let $\mathcal{P}:=\{P_x:x\in D\}$ be a set of parabolic subgroups of $\GL(n,\C)$ indexed by $D$. Denote by $L_x<P_x$ the Levi subgroup, and let
$\mathfrak{l}_x:=\mathrm{Lie}(L_x)<\mathfrak{p}_x:=\mathrm{Lie}(P_x)$ be the respective Lie algebras. We will denote by $\Oo=\prod_{x\in D}\Oo_x$ an orbit in $\mathfrak{l}:=\prod_{x\in D}\mathfrak{l}_x$ under conjugation by $L:=\prod_{x\in D} L_x$ (so that $\Oo_x$ is a adjoint orbit in $\mathfrak{l}_x$). We define $\mathfrak{n}:=\prod_{x\in D}\mathfrak{n}_x<\mathfrak{p}:=\prod_{x\in D}\mathfrak{p}_x$ the nilpotent subalgebra.

A quasi-parabolic bundle of flag type $\Pp$ (note that the rank of the bundle is determined by $\Pp$, as the subgroups are taken inside a fixed $\GL(n,\CC)$) is a pair $\Ee=(E,\sigma)$ where $E$ is a rank $n$ vector bundle and $\sigma:E|_D\longrightarrow \prod_{x\in D} \GL(n,\C)/P_x$ is a reduction of the structure group to the given parabolic at each point of $D$. Without loss of generality, we will assume that $P_x$ is associated to the partition $n=\sum_{i=1}^{r_x}m_{x,i}$, and we will identify $\sigma$ with a collection of flags $\{E_{x,\bullet}:0=E_{x,0}\subset E_{x,1}\subset\dots\subset E_{x,r_x}=E_x\,:\, x\in D\}$. Let $\alpha_x=(\alpha_{x,1},\dots,\alpha_{x,r_x})$ be a increasing  $r_x$-uple of negative real numbers, that we may assume to be contained in $(-1,0]$ \cite{SNonComp}. Moreover, since every moduli space is isomorphic to one with rational weights  \cite[\S 2]{MS}, we may take $\alpha_{x,i}\in \mathbb{Q}$ for all $x\in D,\,i=1,\dots,r_x$. 
A parabolic vector bundle of type $(\Pp,\alpha)$ is a quasi-parabolic vector bundle $\Ee$ of type $\Pp$ together with the assignation of the numbers $\alpha_{x,i}$ to each step of the flag $E_{x,i}$ $i=1,\dots, r_x$. The assignment of these tuples to a quasi-parabolic bundle $\Ee$ defines a parabolic structure on it. The parabolic bundle thus defined can be assigned an invariant called {parabolic degree} 
$$\pardeg(\Ee):=\deg(E)-\sum_{x\in D}\sum_{i=1}^{r_x}\alpha_{x,i}m_{x,i}$$
where $m_{x,i}=\dim E_{x,i}/E_{x,i-1}$ are called the {multiplicities}. The {parabolic slope} of $\Ee$ is the invariant
$$
\parmu(\Ee):=\frac{\pardeg(\Ee)}{\rk E}.
$$
A parabolic vector bundle $\Ff$  of type $(\Pp',\alpha')$ is a parabolic sub-bundle of a parabolic bundle $\Ee$  of type $(\Pp,\alpha)$ if $F$ is a subbundle of $E$, $P_x'\leq P_x$ and $\alpha_{x,i}'\geq\alpha_{x_i}$. The parabolic bundle $\Ee$ is $\alpha$-semistable if for any proper parabolic sub-bundle $\Ff\subset \Ee$
$$
\parmu(\Ff)\leq \parmu(\Ee).
$$
Clearly, to elucidate whether a parabolic bundle is semistable, it is enough to consider quasi-parabolic sub-bundles $\Ff$ with minimal compatible weights. 

 The moduli space $\N(\Pp,\alpha)$  of parabolic bundles of fixed degree and type is a projective variety  \cite[Theorem 4.1.]{MS} (in particular, it is separated), 
 possibly singular, with closed points parametrising S-equivalence classes of semistable parabolic bundles (or equivalently, isomorphism classes of polystable parabolic bundles). Its dimension is \cite[Theorem 5.3]{MS}
 \begin{equation}\label{eq:dim_N}
   \dim \N(\Pp,\alpha)=(g-1)n^2+1+\frac{1}{2}\left(\sum_{x\in D}n^2-\sum_{i=1}^{r_x}m_{x,i}^2\right).
 \end{equation}{}

A parabolic Higgs bundle of type $(\Pp,\alpha)$ is a pair $(\Ee,\varphi)$ where $\Ee$ is a parabolic vector bundle of type $(\Pp,\alpha)$ and
$\varphi\in H^0(X,\End(E)\otimes K(D))$ is a Higgs field preserving the quasi-parabolic structure (namely, the flag at the prescribed points). Endomorphisms satisfying this condition are called parabolic, and the corresponding sheaf is denoted by $\PEnd(\Ee)$. The subsheaf $\SPEnd(\Ee)\subset\PEnd(\Ee)$ of endomorphisms that induce the zero Higgs field on the graded object is called the sheaf of strongly parabolic endomorphisms. Note that there are exact sequences
\begin{equation}\label{eq:PEnd}
0\longrightarrow\PEnd(\Ee)\longrightarrow\End(E)\longrightarrow\mathfrak{gl}(n,\CC)^{|D|} /\mathfrak{p}\longrightarrow 0,
\end{equation} 
where the rightmost arrow are given by restriction to the divisor $D$, and
\begin{equation}\label{eq:SPEnd}
0\longrightarrow\SPEnd(\Ee)\longrightarrow\PEnd(\Ee)\longrightarrow\mathfrak{p}/\mathfrak{n}\cong\mathfrak{l}\longrightarrow 0,
\end{equation}
where the rightmost arrow is given by restriction to the divisor $D$, followed by the surjection $\mathfrak{p}\projects\mathfrak{l}$. 

Thus, a parabolic Higgs bundle is a pair $(\Ee,\varphi)$ where $\Ee$ is a parabolic vector bundle and
$\varphi\in H^0(X,\PEnd(\Ee)\otimes K(D))$. Semistability is defined similarly to the parabolic bundle case, except that the subsheaves taken into account are those preserved by the Higgs field. The corresponding moduli space $\M(\Pp,\alpha)$ is a quasi-projective variety \cite[Theorem 2.4.8]{Yoko} of dimension \cite[Theorem 5.2]{YokoInf} 
$$
\dim\M(\Pp,\alpha)=(2g-2+d)n^2+1
$$
where $d=\deg D$.  See also \cite{Thaddeus,MichaHilbert} for strongly parabolic Higgs bundles.

\subsection{Poisson structure of the moduli space of Higgs bundles}\label{sec:Poisson} The moduli space of Higgs bundles is Poisson with hyperkaehler leaves. Underlying this rich geometry is the nonabelian Hodge correspondence, a diffeomorphism between three moduli spaces: the Dolbeault (or Higgs) moduli space, the De Rham moduli space of meromorphic connections on $(X,D)$ and the Betti moduli space of representations of the punctured surface fundamental group. The nonabelian Hodge correspondence is due to the work of numerous authors of which we stress in here \cite{SNonComp}.

 The symplectic geometry of the Betti moduli space has mainly been studied by Boalch \cite{BoalchQuasi}. From the Dolbeault point of view, Bottacin \cite{Bottacin} and Markman \cite{Markman} studied the symplectic geometry for meromorphic Higgs bundles (see Section \ref{sec:classical} for more details). 

Following \cite[\S3.2.4]{LM}, consider the complex
$$
C_{\bullet}: \PEnd(\Ee)\stackrel{\ad(\varphi)}{\longrightarrow }\PEnd(\Ee)\otimes K(D)
$$
whose dual complex reads
$$
C_{\bullet}^*: \SPEnd(\Ee)\stackrel{\ad(\varphi)}{\longrightarrow }\SPEnd(\Ee)\otimes K(D)
$$
by duality of $\PEnd(\Ee)$ and $\SPEnd(\Ee)(D)$.

If $(\Ee,\varphi)$ is stable, then the Zariski tangent space of $\M(\Pp,\alpha)$ at $(\Ee,\varphi)$ is identified with the hypercohomology group
\[T_{(\Ee,\varphi)}\M(\Pp,\alpha)=\HH^1(C_{\bullet}),\qquad T_{(\Ee,\varphi)}^*\M(\Pp,\alpha)=\HH^1(C_{\bullet}^*),\]
and the Poisson structure of $\M(\Pp,\alpha)$ is given by the natural map
\begin{equation}\label{eq:poisson_map}
\eta_{{(\Ee,\varphi)}}:T_{(\Ee,\varphi)}^*\M(\Pp,\alpha)\longrightarrow T_{(\Ee,\varphi)}\M(\Pp,\alpha)
\end{equation}
defined by Serre duality. 

More precisely,  the  space $\HH^1(C_\bullet)$ of infinitesimal deformations of any parabolic Higgs bundle sits into an exact sequence
\begin{equation}\label{eq:defs}
0\longrightarrow\HH^0(C_\bullet)\longrightarrow H^0(\PEnd(\Ee))\longrightarrow H^0(\PEnd(\Ee)\otimes K(D))\longrightarrow
\end{equation}
\[
\phantom{(2.5)0\longrightarrow}\HH^1(C_\bullet)\longrightarrow H^1(\PEnd(\Ee))\longrightarrow H^1(\PEnd(\Ee)\otimes K(D))\longrightarrow
\]
\[
\phantom{(2.5) |0\longrightarrow}\HH^2(C_\bullet)\longrightarrow 0\phantom{ H^1(\PEnd(\Ee))\longrightarrow H(\PEnd(\Ee)\otimes K(D))\longrightarrow}.
\]
The term $H^1(\PEnd(\Ee))$ is the space of infinitesimal deformations of the underlying parabolic bundle, which is Serre dual to the space of strongly parabolic Higgs fields. This defines the Poisson map \eqref{eq:poisson_map}. In particular, if $\Ee$ is stable
\begin{equation}\label{eq:T_N}
T_{\Ee}\N(\Pp,\alpha)=H^1(\PEnd(\Ee)).
\end{equation}

The symplectic leaves are determined by the orbits of $\mathrm{gr}(\varphi_x)\in \End(\mathrm{gr}(E_{\bullet,x}))\otimes K(D)_x$ for all $x\in D$, namely, the twisted endomorphism induced by the Higgs field on the graded vector space $\mathrm{gr}(E_{x,\bullet}):=\oplus_{i=1}^{r_x}E_{x,i}/E_{x,i-1}$ at each of the punctures $x\in D$. Let $\mathfrak{p}:=\prod_{x\in D}\mathfrak{p}_x$ and let $\mathfrak{l}:=\prod_{x\in D}\mathfrak{l}_x$ be its Levi quotient (so that $\mathrm{gr}(\varphi_x)\in\mathfrak{l}_x$). There is a short exact sequence of complexes
$$
\xymatrix{
	0\ar[r]&\SPEnd(\Ee)\ar[r]\ar[d]_{-[\varphi, \cdot]}&\PEnd(\Ee)\ar[r]\ar[d]_{[\varphi,\cdot ]}&\mathfrak{l}\ar[r]\ar[d]_{[\mathrm{gr}(\varphi), \cdot]}&0\\	
	0\ar[r]&\SPEnd(\Ee)\otimes K(D)\ar[r]&\PEnd(\Ee)\otimes K(D)\ar[r]&\mathfrak{l}\otimes K(D)|_D\ar[r]&0.
}
$$
Note that in particular, when $\varphi$ is strongly parabolic,  the left and central columns are the defomation complexes of the moduli space of strongly parabolic Higgs bundles $\Ms$ and $\M(\Pp,\alpha)$ respectively. Thus, taking hypercohomology of the columns, we get a long exact sequence:
\begin{equation}\label{eq:LES_hyper}
0\longrightarrow \CC\longrightarrow\Ker([\mathrm{gr}(\varphi),\cdot])\longrightarrow T_{(\Ee,\varphi)}^*\M(\Pp,\alpha)\stackrel{\eta}{\longrightarrow} T_{(\Ee,\varphi)}\M(\Pp,\alpha)\longrightarrow0.
\end{equation}
Therefore the symplectic leaves (that is, the submanifolds along which the Poisson bracket restricts to a symplectic form) are in correspondence with adjoint orbits  $\Oo$ inside $\mathfrak{l}$. Indeed, the rank of $\eta$ is constant if and only if so is the rank of the kernel of $\mathrm{gr}(\varphi)$. Moreover, by fixing $\Oo$, or equivalently, by fixing its centraliser subalgebra  $\mathfrak{z}\subset\mathfrak{l}$, one obtains the integrable distribution $T^*\M(\Pp,\alpha)/\ad(\mathfrak{z})$ of $T_{(\Ee,\varphi)}\M(\Pp,\alpha)$, whose underlying manifold is precisely the the symplectic leaf $\MO$  associated to the orbit $\Oo$.
 We will use the following special notation for the main leaves under study: the regular nilpotent symplectic leaf ${\M(\Pp,\alpha)}_{nilp, reg}$ (corresponding to the regular nilpotent orbit),  and the strongly parabolic symplectic leaf $\M(\Pp,\alpha)_{st}$ (corresponding to the zero orbit). Likewise, we define  $\Mn:=\overline{\M(\Pp,\alpha)}_{nilp, reg}$ to be the closure of the regular nilpotent leaf. Note that $\M(\Pp,\alpha)_{st}\subset \Mn$ is the only closed stratum, contained in the closure of all the other nilpotent leaves.

Similarly to the case of vector bundles, denoting by $\N(\Pp,\alpha)^s\subset \N(\Pp,\alpha)$ the locus of stable points, the space $T^*\N(\Pp,\alpha)^s$ is a dense open subset of $\M(\Pp,\alpha)_{st}$ \cite[Remark 5.1]{YokoInf}. In particular, for $\Ee\in \N(\Pp,\alpha)$, the space $H^1(X,\PEnd(\Ee))=H^0(X,\SPEnd(\Ee)\otimes K(D))^*$ of infinitesimal deformations  of $\Ee$ matches the Zariski tangent space whenever $\Ee$ is stable.

The next lemma is a straightforward generalisation of \cite[Lemma 2.1]{LM}, so we omit the proof.
\begin{lemma}\label{lm:no_nilp_par_end}
	Let $\mathfrak{s}\subset\mathfrak{l}$ be a nilpotent subalgebra. Let  $\Ee$ be an $\alpha$-stable parabolic bundle of flag-type $\Pp$, and $\PEnd(\Ee)_{\mathfrak{s}}\subset\PEnd(\Ee)$ be the subsheaf defined by the exat sequence
	\[
	0\longrightarrow \PEnd(\Ee)_{\mathfrak{s}}\longrightarrow\PEnd(\Ee)\longrightarrow\mathfrak{l}/\mathfrak{s}\longrightarrow 0,
	\]
	where $\PEnd(\Ee)\longrightarrow\mathfrak{l}/\mathfrak{s}$ is the composition of the restriction $\PEnd(\Ee)\longrightarrow\mathfrak{p}$ $\varphi\mapsto\varphi|_D$, and the quotients $\mathfrak{p}\projects\mathfrak{l}\projects\mathfrak{l}/\mathfrak{s}$. Then $H^0(\PEnd(\Ee)_{\mathfrak{s}})=0$. 
	
	Moreover, $H^0(\PEnd(\Ee))\cong\CC$.
\end{lemma}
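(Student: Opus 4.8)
The plan is to reduce the statement to the corresponding fact for ordinary (non-parabolic) bundles by a stability argument, using the exact sequences \eqref{eq:PEnd} and \eqref{eq:SPEnd} together with the assumed analogue of \cite[Lemma 2.1]{LM}. Recall that a section $\psi\in H^0(\PEnd(\Ee))$ is an endomorphism of $E$ preserving all the flags $E_{x,\bullet}$; in particular it is an endomorphism of the underlying bundle $E$, but it need not preserve the parabolic structure in the sense of respecting the weights. However, since $\Ee$ is $\alpha$-stable (hence in particular $\Ee$ is a simple parabolic bundle), one knows $H^0(\PEnd(\Ee))=\CC$: this is precisely the ``Moreover'' clause, which I would prove first, as it is the easier half and is needed for the main assertion. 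For this, I would argue exactly as in the non-parabolic case: given $0\neq\psi\in H^0(\PEnd(\Ee))$, pick an eigenvalue $\lambda$ of $\psi$ at a generic point; then $\psi-\lambda\cdot\id$ is a parabolic endomorphism with nontrivial kernel and image, and $\Ker(\psi-\lambda\cdot\id)$ and $\Im(\psi-\lambda\cdot\id)$ inherit parabolic structures (with the induced flags and weights) making them parabolic subbundles; stability applied to both forces $\parmu(\Ker)\le\parmu(\Ee)\le\parmu(\Im)$, and since parabolic degree is additive in the short exact sequence $0\to\Ker\to\Ee\to\Im\to 0$ one gets equalities, contradicting stability unless $\Ker=\Ee$, i.e. $\psi=\lambda\cdot\id$. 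Hence $H^0(\PEnd(\Ee))\cong\CC$.

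For the vanishing $H^0(\PEnd(\Ee)_{\lie s})=0$, the key point is that a section of $\PEnd(\Ee)_{\lie s}$ is a parabolic endomorphism whose induced endomorphism of the graded pieces $\gr(E_{x,\bullet})$ at each $x\in D$ lands in the nilpotent subalgebra $\lie s\subset\lie l$. I would combine this with the previous paragraph: by that result, any global parabolic endomorphism is a scalar $\lambda\cdot\id$; but the induced endomorphism of $\gr(E_{x,\bullet})$ is then $\lambda\cdot\id\in\lie l_x$, and this lies in the nilpotent subalgebra $\lie s$ only if $\lambda=0$ (nilpotent elements have zero trace, or more simply a nonzero scalar is never nilpotent). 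Hence $\PEnd(\Ee)_{\lie s}$ has no nonzero global sections, which is the assertion. One should check that the hypothesis ``$\lie s\subset\lie l$ is a nilpotent subalgebra'' is used exactly here — it guarantees $\id\notin\lie s$ unless we are in the trivial zero situation.

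Alternatively — and this is presumably the route \cite[Lemma 2.1]{LM} takes, which I would present if one wants a self-contained argument not invoking the ``Moreover'' clause first — one works directly with the filtration associated to a putative section. Given $0\neq\psi\in H^0(\PEnd(\Ee)_{\lie s})$, consider its generalized eigenspace decomposition at the generic point; since $\gr(\psi_x)\in\lie s$ is nilpotent for every $x\in D$, one shows that the only eigenvalue is $0$, so $\psi$ is everywhere nilpotent as an endomorphism of $E$. Then $\Ker\psi$ is a proper nonzero parabolic subbundle, and iterating ($\Ker\psi\subsetneq\Ker\psi^2\subsetneq\cdots$) together with the strongly-parabolic-at-$D$ condition forces all the relevant parabolic slopes to equal $\parmu(\Ee)$, contradicting stability. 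The main (and really only) obstacle is bookkeeping: one must verify that the induced flags and weights on $\Ker\psi$, $\Im\psi$ etc. are genuinely parabolic subbundle/quotient structures and that parabolic degree behaves additively, and that the condition defining $\PEnd(\Ee)_{\lie s}$ is preserved under passing to these sub/quotients — all of which is routine and is exactly why the paper says ``so we omit the proof.'' I would therefore keep the write-up short, flagging the reduction to \cite[Lemma 2.1]{LM} and the nilpotency-forces-scalar-zero step as the substantive content.
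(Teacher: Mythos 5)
Your proposal is correct and follows essentially the route the paper intends: the paper omits the proof, deferring to \cite[Lemma 2.1]{LM}, and your first argument (simplicity of the $\alpha$-stable parabolic bundle $\Ee$ gives $H^0(\PEnd(\Ee))=\CC\cdot\id$, so a section of $\PEnd(\Ee)_{\mathfrak{s}}$ is a scalar whose graded residue must lie in $\mathfrak{s}$, forcing the scalar to vanish) is exactly the straightforward generalisation alluded to there. The only point worth making explicit is the one you already flag: $\mathfrak{s}$ must be read as a subalgebra consisting of nilpotent elements, as in the paper's application $\mathbf{O}=\prod_{x\in D}\mathfrak{n}'_x\cap\mathfrak{l}_x$, since a subalgebra that is merely nilpotent as a Lie algebra (e.g.\ the scalars) could contain $\id$ and the vanishing would fail.
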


\subsection{The Hitchin map} The Hitchin map 
\begin{equation}\label{eq:Hitchin_map}
h_{\Pp,\alpha}\,:\, \M(\Pp,\alpha)\longrightarrow \H_{D}:=\bigoplus H^0(X,K^i(iD))
\end{equation}
assigns to each Higgs bundle the characteristic polynomial of the Higgs field.
This is a projective map \cite[Corollary 5.12]{Yoko}. In particular, it is proper and of finite type.

Logares--Martens studied the complete integrability of the restriction of \eqref{eq:Hitchin_map} to the generic symplectic leaves. These correspond to semisimple regular orbits (see \cite[\S 3.2.4]{LM}). However, many of their arguments extend verbatim to the non-semisimple regular orbits, and we will mention this when needed. Baraglia--Kamgarpour extended the study of the integrable system to strongly parabolic bundles \cite{BK}. See also \cite{SWW} for proofs in arbitrary characteristic.

\subsection{$\CC^\times$-action}\label{sec:C_action} By \cite[\S 8]{SNonComp}, there is a action of $\CC^\times$ on $\M(\Pp,\alpha)$ given by
multiplying the Higgs field by scalars
$$ \lambda \cdot (\Ee, \varphi) \mapsto (\Ee, \lambda \cdot \varphi).$$
The Hitchin map is $\CC^\times$-equivariant for this action and a suitable weighted action on $\H_D$ (with weights given by the degrees of the generators of the ring of invariants $\CC[\mathfrak{gl}_n(\CC)]^{\GL(n,\CC)}$). By properness of the Hitchin map, the limits
$$
\lim_{\lambda\to 0}\lambda\cdot(\Ee,\varphi)
$$
exist and belong to the nilpotent cone $h_{\Pp,\alpha}^{-1}(0)$, as so do the limits 
\[
\lim_{t\to\infty} t\cdot (\Ee,\varphi)
\]
when $\varphi$ is nilpotent. Both these kinds of limits are fixed under the $\CC^\times$-action. Fixed points under the $\CC^\times$-action were characterised \cite[Theorem 8]{SNonComp}. Before discussing this, we start with a definition.

\begin{definition}{\cite[Proof of Theorem 8]{SNonComp}}
   A system of Hodge bundles is a parabolic Higgs bundle $(\Ee,\varphi)$ such that $E=\bigoplus_{p=1}^s E^p$ and $\varphi:E^p\longrightarrow E^{p-1}\otimes K(D)$. 
\end{definition}
It is proven in \cite[Proof of Theorem 8]{SNonComp}, that fixed points are precisely systems of Hodge bundles. 
 
\begin{remark}\label{rk:Hodge_bundle_str_par}
	We note that if  $(\Ee,\varphi)$ is  fixed for the $\CC^\times$-action, there exists a refinement of the flags such that $(\Ee,\varphi)$ becomes strongly parabolic with respect to the latter. This is a general fact for nilpotent quasi-parabolic bundles (see the proof of Proposition \ref{prop:image_V_nilp}), but $\CC^\times$-fixed points have a preferred refinement preserving the structure of a system of Hodge bundles. 
 
 To see this in the special case of fixed points, note that, as explained in \cite[Theorem 8]{SNonComp}, fixed points for the $\CC^\times$-action have underlying parabolic bundle of the form $\Ee=(\bigoplus_{p=1}^s E^p,\sigma)$, with $E^p$ the eigenspace of the automorphism of $E$ swapping $\varphi$ and $t\varphi$ \cite[Lemma 4.1]{SimpsonVariations} (where we have used that if $\Ee=(E,\varphi)$ is a fixed point, then so is $E$). In turn, $\sigma$ preserves the structure of a system of Hodge bundles (i.e., letting $m_p=\rk(E^p)$, $\ol{m}=(m_1,\dots,m_s)$ be the corresponding partition and $P_{\ol{m}}$ be the associated parabolic, then  $P_{m_p}<P_x$, so that  $\sigma$   induces a $\GL(m_p,\CC)$-equivariant morphism $\sigma: E^p_x\to \GL(m_p,\CC)/P_{p,x}$). The fact that $\varphi:E^p\longrightarrow E^{p-1}\otimes K(D)$ follows from the definition of the eigenspaces.  
\end{remark}{}
We note that since nilpotency along $D$ is preserved by the $\CC^\times$-action, $\M(\Pp,\alpha)_{nilp}$ is  $\CC^\times$-invariant. On the other hand, $h_{\Pp,\alpha}^{-1}(0)\subset \M(\Pp,\alpha)_{nilp}$, so that studying the nilpotent cone for $\M(\Pp,\alpha)$ amounts to studying it for $\M(\Pp,\alpha)_{nilp}$. Remark \ref{rk:Hodge_bundle_str_par} implies that the strongly parabolic leaf contains all the information needed to recover the fixed points.
\subsection{Parabolic (Higgs) bundles as (Higgs) bundles on orbicurves}\label{sec:rational_weights} For rational weights $\alpha$ the stack of parabolic bundles (respectively strongly parabolic Higgs bundles) is isomorphic to a stack of bundles (respectively Higgs bundles) on orbicurves \cite{BiswasOrbi,Borne} whose orbifold structure is determined by the weights. In particular, the results of \cite{L} on the nilpotent cone of the stack of Higgs bundles on a smooth curve extend to this setup  \cite[\S 5.4]{DPHecke}. Given the fragmentation of the literature on this topic, we will hereby include a detailed proof of some of these results (see Lemma \ref{lm:vs_is_stable}). 

Let $\hat{X}_{D,\alpha}$ be the root stack (or orbifold) obtained from $(X,D,\alpha)$ (recall that we assume weights to be rational) \cite{Cadman}. It is a smooth Deligne--Mumford stack. Consider the categories $\mathbf{Par}(X,D,\alpha)$ (resp. $\mathbf{Vec}(\hat{X}_{D,\alpha})$ of vector bundles on $\hat{X}_{D,\alpha}$. These are tensorial categories \cite{MY,Borne}, which are equivalent under a twist by a line bundle composed with  pushforward via the map $\hat{X}_{D,\alpha}\longrightarrow X$ \cite[Theorem 3.13]{Borne}. 

\subsection{Universal bundles and locally universal bundles}\label{sec:univ_bundles}
Yokogawa proved that $\M(\Pp,\alpha)$ is an open subset of a good quotient of a projective scheme by $\mathrm{PGL}(N)$  \cite[Theorem 4.6]{Yoko}, with stable locus  $\M(\Pp,\alpha)^s$ contained in the corresponding geometric quotient, giving the smooth points \cite[Theorem 5.2]{YokoInf}. As an open subset $\M(\Pp,\alpha)^s$ containts the moduli space $\M(\Pp,\alpha)^0$ of parabolic Higgs bundles with underlying stable parabolic bundle. The inclusion is strict by \cite[Claim 3.2 (i)]{Boden-Yoko} (see also Remark \ref{rk:Un}): even for non generic weights there are more stable pairs than those in $\M(\Pp,\alpha)^0$. By \cite[Remark 5.1]{YokoInf}, \'etale locally over $\M(\Pp,\alpha)^0$  there is a universal bundle. The action of the center of $\GL(N)$ is non trivial \cite[page 464]{Boden-Yoko}, and therefore the bundle does in general not descend. It only descends to $\M(\Pp,\alpha)^0$ when the parabolic weights are generic \cite[Proposition 3.2]{Boden-Yoko}.  However, regardless of the weights, points of $\M(\Pp,\alpha)^{0}$ always have an \'etale neighbourhood admitting a universal bundle \cite[page 16]{Bhosle}, \cite[page 464]{Boden-Yoko}. The same holds for $\N(\Pp,\alpha)^s$.

\section{Parabolic bundles and very stability}\label{sec:criteria}

\subsection{Criteria for very stability} In this section we prove a criterion for very stability of parabolic bundles via the Hitchin map. 

Let us start by recalling \cite[Lemma 1.3]{Hacen}. We likewise provide an alternative simple proof via toric geometry that was hinted to us by an anonymous referee.
\begin{lemma}{\cite[Lemma 1.3]{Hacen}}\label{lm:Hacen}
Let $ f=(f_1,\dots, f_n):\mathbb{A}^m\longrightarrow\mathbb{A}^n$ be a morphism given by homogeneous polynomials. Then, if $f^{-1}(0)=0$, $f$ is finite.
\end{lemma}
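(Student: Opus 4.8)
The plan is to reduce the statement to the graded Nakayama lemma. Write $A=\CC[y_1,\dots,y_n]$ and $B=\CC[x_1,\dots,x_m]$, and let $\phi\colon A\to B$ be the $\CC$-algebra homomorphism with $\phi(y_i)=f_i$, so that $f$ is the induced morphism $\AA^m\to\AA^n$. Since each $f_i$ is homogeneous, say of degree $d_i$ (we may assume $d_i\ge 1$: a nonzero constant $f_i$ would force $f^{-1}(0)=\varnothing$), we grade $A$ by $\deg y_i=d_i$; then $A$ and $B$ are connected graded $\CC$-algebras (both with degree-zero part $\CC$), $B$ has finite-dimensional graded pieces, and $\phi$ is a homomorphism of graded rings. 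As source and target are affine, $f$ is finite precisely when $B$ is a finitely generated $A$-module via $\phi$, and this is what I would establish.

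First I would reformulate the hypothesis $f^{-1}(0)=\{0\}$ in terms of the fibre of $\phi$ over the irrelevant ideal $\mathfrak{m}_A=(y_1,\dots,y_n)=A_+$. One has $\mathfrak{m}_A B=(f_1,\dots,f_n)$ as an ideal of $B$, hence $B\otimes_A\CC=B/(f_1,\dots,f_n)$. The condition $f^{-1}(0)=\{0\}$ says exactly that $V(f_1,\dots,f_n)=\{0\}\subset\AA^m$, so by the Nullstellensatz $\sqrt{(f_1,\dots,f_n)}=(x_1,\dots,x_m)$; thus a power of the irrelevant ideal of $B$ is contained in $(f_1,\dots,f_n)$, and $B/(f_1,\dots,f_n)$ is a local Artinian, hence finite-dimensional, $\CC$-vector space.

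Next I would invoke graded Nakayama: choose homogeneous $b_1,\dots,b_r\in B$ whose classes span $B/\mathfrak{m}_A B$ over $\CC$, and prove by induction on degree that they generate $B$ as an $A$-module. For $\beta\in B_e$, its class in $B/\mathfrak{m}_A B$ is a $\CC$-linear combination of the degree-$e$ components of the $b_i$, so $\beta-\sum c_ib_i$ lies in $(\mathfrak{m}_A B)_e=\sum_{j>0}A_jB_{e-j}$; by the inductive hypothesis each $B_{e-j}$ with $j>0$ is contained in the $A$-submodule generated by $b_1,\dots,b_r$, hence so is $\beta$. Therefore $B$ is module-finite over $A$, i.e.\ $f$ is finite.

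I do not anticipate a serious difficulty; the points needing care are fixing the grading on $A$ so that $\phi$ is graded when the $f_i$ have unequal degrees, and the use of the Nullstellensatz to see that $B/(f_1,\dots,f_n)$ is finite-dimensional, which is the only place the hypothesis $f^{-1}(0)=\{0\}$ is genuinely used. An alternative route, closer in spirit to the rest of the paper, is to observe that $f$ is $\CC^\times$-equivariant for the scaling action on $\AA^m$ and the weighted action on $\AA^n$ with weights $d_i$, deduce from $f^{-1}(0)=\{0\}$ that $f$ is proper, and use that a proper morphism of affine schemes is finite; but this rests on the same graded input, so the module-theoretic argument above is the most economical.
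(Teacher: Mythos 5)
The paper offers no proof of this lemma at all: it is imported verbatim as \cite[Lemma 1.3]{Hacen}, so there is no internal argument to compare yours against. Your proof is correct and complete, and it is essentially the standard argument (and the one given in the cited reference): the hypothesis $f^{-1}(0)=\{0\}$ plus the Nullstellensatz gives that $\CC[x_1,\dots,x_m]/(f_1,\dots,f_n)$ is a finite-dimensional vector space, and graded Nakayama (with $\deg y_i=\deg f_i$ so that $\phi$ is graded) upgrades this to module-finiteness of $\CC[x_1,\dots,x_m]$ over $\CC[y_1,\dots,y_n]$, i.e.\ finiteness of $f$. Your handling of the two delicate points --- assigning unequal weights to the $y_i$ so the map is graded, and disposing of nonzero constant $f_i$ --- is exactly right, so nothing is missing.
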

\begin{proof}
Note that since $f^{-1}(0)=0$, the morphism $f$ extends to a morphism
\[
\ol{f}:\PP^m\longrightarrow\PP(d_1,\dots,d_n,1)\qquad [x_1:\dots:x_m:y]\mapsto[y\cdot f_1(\ol{x}):\dots:y\cdot f_n(\ol{x}):y]
\]
of weighted projective spaces, where $d_i=\deg(f_i)$, which is moreover toric. Indeed, by construction $\ol{f}$ restricts to a torus homomorphism $(\CC^\times)^m\longrightarrow (\CC^\times)^n$ for a suitable group structure on the target torus taking weights into consideration. By \cite[Theorem 3.4.7.]{CLS}, the map $\ol{f}$ is proper, and by upper semicontinuity of the dimension, it is also finite. In particular $\dim\Im(\ol{f})=m\leq n$ (again, by upper semicontinuity of the dimension). Therefore $f:\AA^m\longrightarrow\AA^n$ is a quasi finite morphism. We claim that it is also proper. To see this, consider valued ring $R$ with quotient field $k$, and a commutative diagram
\[
\xymatrix{\Spec(k)\ar[d]\ar[r]&\AA^m\ar[d]\ar[r]&\PP^m\ar[d]\\
\Spec(R)\ar[r]\ar@{-->}[ru]^{l_1}\ar@{-->}[rru]^{l_2}&\AA^n\ar[r]&\PP^n.}
\]
Now, the only way $l_1$ may not exist is if the image of the non generic point $p$ is $0$. Now, by the valuative criterion for properness, the arrow $l_2$ exists and it must be $l_2(p)=\infty$. Therefore, $\Im(l_2)\subset\AA^m$ and $l_1$ also exists.
\end{proof}
Given a quasi-parabolic bundle $\Ee$, we denote $\V_\Ee:=H^0(X,\mathrm{ParEnd}(\Ee)\otimes K(D))$. For each adjoint orbit $\Oo=\prod_{x\in D}\Oo_x$  of $\prod_{x\in D}\mathfrak{l}_x$ (where  $\mathfrak{l}_x<\mathfrak{p}_x$ is the Levi subalgebra), denote by $\V_{\Ee,\Oo}$ the subset of $\VE$ corresponding to Higgs fields with $\gr(\varphi_x)\in\Oo_x$. When $\Oo=0$, we will denote $\VO$ by $\Vst=H^0(X,\SPEnd(\Ee)\otimes K(D))$, and if $\Oo=\Oo_{rn}$ is the regular nilpotent orbit, then the closure of $\VO$ in $\VE$ is denoted by  $\Vn$. This is the set of nilpotent Higgs fields.
\begin{remark}\label{rk:VO_as_intersection}
If $\Ee$ is $\alpha$-stable, then $\VE\subset\M(\Pp,\alpha)$, $\VO=\VE\cap\MO$, and $\Vn=\VE\cap\Mn$, but the definition of these objects is independent of the stability parameter. 
\end{remark}
\begin{definition}
 A quasi-parabolic bundle $\mathcal{E}=(E,\{E_{x,\bullet}\,:\,x\in D\})$ is called very stable if it has no non-zero nilpotent parabolic Higgs field $\varphi\in\VE$. It is called {strongly very stable} if and only if it has no non-zero strongly parabolic nilpotent Higgs field. 
 
 An $\alpha$-stable bundle that is not (strongly) very stable is called (strongly) wobbly. Let $\Wo\subset\N(\Pp,\alpha)$ (respectively $\Wo_{st}\subset\N(\Pp,\alpha)$) denote the wobbly locus (resp. the strong wobbly locus). 
\end{definition}
\begin{remark}\label{rk:vs_indep_weight}
\begin{enumerate}
    \item\label{it:vs_depends_on_qp} Although the definition of (strong) very stability only depends on the quasi-parabolic structure,  for every assignment of a parabolic structure $(\Pp,\alpha)$, very stable bundles are $\alpha$-stable (see Lemma \ref{lm:vs_is_stable}  and Remark \ref{rk:wobbly_alpha}).
    \item\label{it:vs=nvs}  Note that $\mathcal{E}$ is very stable if and only if there is no nilpotent Higgs field in $\Vn\setminus{\{0\}}$.
    \item\label{it:vs_is_svs} By definition, very stable parabolic bundles are strongly very stable.
\end{enumerate}{}
\end{remark}
When the weights are rational, as observed in Section \ref{sec:rational_weights}, the tools developed in \cite{L} are available to analyse the moduli space of strongly nilpotent Higgs bundles. This gives the following lemma.
\begin{lemma}
	\label{lm:vs_is_stable}
	Let $\Ee$ be parabolic of type $(\Pp,\alpha)$. If it is strongly very stable, then it is $\alpha$-stable. In particular, if it is very stable, then it is $\alpha$-stable.  
\end{lemma}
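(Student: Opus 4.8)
The plan is to argue by contraposition and transport the statement to the associated orbicurve, where it becomes the Drinfeld--Laumon equivalence (cf.\ \cite{L}) between very stability and stability for vector bundles. Suppose that for some assignment of weights the parabolic bundle $(\Ee,\alpha)$ is \emph{not} $\alpha$-stable; I will construct a non-zero nilpotent strongly parabolic Higgs field on $\Ee$, which contradicts strong very stability --- a property that depends only on the quasi-parabolic structure (Remark~\ref{rk:vs_indep_weight}). Recall that the weights may be assumed rational (Section~\ref{sec:prel}), so by Section~\ref{sec:rational_weights} and \cite{BiswasOrbi,MichaHilbert} there is an orbicurve $\pi\colon Y\to X$ --- the root stack along $D$ with local orders the common denominators of the $\alpha_{x,i}$ --- under which parabolic bundles of type $(\Pp,\alpha)$ correspond to rank-$n$ vector bundles $\mathcal V$ on $Y$; moreover this equivalence is exact, preserves sub-objects, matches $\alpha$-slope with orbifold slope $\mu_Y$, and satisfies $\pi_*\End\mathcal V=\PEnd(\Ee)$ and $\pi_*(\End\mathcal V\otimes K_Y)=\SPEnd(\Ee)\otimes K(D)$, so that $H^0(Y,\End\mathcal V\otimes K_Y)=\Vst$.

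Since $\mathcal V$ is then not stable on $Y$, there is a sub-bundle $\mathcal F\subset\mathcal V$, with quotient $\mathcal G=\mathcal V/\mathcal F$ and $r:=\rk\mathcal F\in\{1,\dots,n-1\}$, such that $\mu_Y(\mathcal F)\ge\mu_Y(\mathcal V)$. Any non-zero $\psi\colon\mathcal G\to\mathcal F\otimes K_Y$ then produces a non-zero Higgs field
\[
\varphi\colon\quad \mathcal V\ \twoheadrightarrow\ \mathcal G\ \xrightarrow{\ \psi\ }\ \mathcal F\otimes K_Y\ \hookrightarrow\ \mathcal V\otimes K_Y
\]
with $\varphi(\mathcal F)=0$ and $\varphi(\mathcal V)\subseteq\mathcal F\otimes K_Y$; in particular $\varphi^{2}=0$, so $\varphi$ is nilpotent. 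To produce $\psi$ I would use Serre duality on the smooth proper stack $Y$,
\[
H^0\!\bigl(Y,\Hom(\mathcal G,\mathcal F)\otimes K_Y\bigr)\ \cong\ H^1\!\bigl(Y,\Hom(\mathcal F,\mathcal G)\bigr)^{*},
\]
and then show $\chi\bigl(Y,\Hom(\mathcal F,\mathcal G)\bigr)<0$, which forces the right-hand side to be non-zero. Indeed $\pi_*$ is exact (the fibres of $\pi$ are classifying stacks of finite groups), so $\chi\bigl(Y,\Hom(\mathcal F,\mathcal G)\bigr)=\chi\bigl(X,\pi_*\Hom(\mathcal F,\mathcal G)\bigr)=\deg\bigl(\pi_*\Hom(\mathcal F,\mathcal G)\bigr)+r(n-r)(1-g)$, while push-forward along $\pi$ never increases degree, so
\[
\deg\bigl(\pi_*\Hom(\mathcal F,\mathcal G)\bigr)\ \le\ \deg_Y\Hom(\mathcal F,\mathcal G)\ =\ rn\bigl(\mu_Y(\mathcal V)-\mu_Y(\mathcal F)\bigr)\ \le\ 0;
\]
since $g\ge 2$ and $0<r<n$, the term $r(n-r)(1-g)$ is strictly negative, whence $\chi<0$ and $\psi$ exists.

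Transporting $\varphi$ back through the dictionary, and using that $\mathcal F\subset\mathcal V$ corresponds to a parabolic sub-bundle $\Ff\subset\Ee$, one obtains a non-zero $\overline\varphi\in\Vst=H^0(X,\SPEnd(\Ee)\otimes K(D))$ factoring as $\Ee\twoheadrightarrow\Ee/\Ff\to\Ff\otimes K(D)\hookrightarrow\Ee\otimes K(D)$, hence square-zero; thus $\overline\varphi$ is a non-zero strongly parabolic nilpotent Higgs field on $\Ee$, contradicting the hypothesis and proving the first assertion. The second is immediate, since a very stable bundle is strongly very stable by Remark~\ref{rk:vs_indep_weight}\eqref{it:vs_is_svs}.

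I expect the only real difficulty to be the orbifold bookkeeping, not a new idea: one has to check the entries of the dictionary used above --- in particular $\pi_*\End\mathcal V=\PEnd(\Ee)$, $\pi_*(\End\mathcal V\otimes K_Y)=\SPEnd(\Ee)\otimes K(D)$, the compatibility with sub-objects and with the identification of $\alpha$-slope and $\mu_Y$, and the two properties of $\pi$ used in the Euler characteristic computation (exactness of $\pi_*$ on bundles and $\deg\pi_*\mathcal A\le\deg_Y\mathcal A$) --- because that is exactly where the parabolic weights enter the estimate. All of these are standard in the description of parabolic bundles as bundles on orbicurves; alternatively, one can avoid orbicurves and run the Drinfeld--Laumon argument directly on $(X,D)$ with the sheaves $\PEnd$, $\SPEnd$ and Yokogawa's parabolic Serre duality, the point again being that the degree of the relevant parabolic $\Hom$-sheaf is controlled by the parabolic slope inequality $\parmu(\Ff)\ge\parmu(\Ee)$.
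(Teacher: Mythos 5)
Your proof is correct and follows essentially the same route as the paper: reduce to rational weights, pass to the associated orbicurve via the Biswas correspondence, and invoke the Drinfeld--Laumon fact that very stable implies stable. The only difference is that the paper simply cites \cite[Proposition 3.5]{L} at that point, whereas you inline its proof (destabilising sub-bundle $\Rightarrow$ square-zero Higgs field via Serre duality and the Euler-characteristic estimate), which is a faithful and correct rendering of Laumon's argument.
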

\begin{proof}
	By assigning rational weights $\alpha$, strongly very stable parabolic bundles can be seen as very stable bundles on an orbifold curve whose stability can be proven by elementary means just as in \cite{L}, using the Grothendieck--Riemann--Roch theorem for root stacks (see \cite[Theorem 3.10]{DV} for an explicit formulation for curves based on \cite{Edidin, Toen}). 
 
 Indeed, if $\Ee$ were not stable, then there would exist a semistable parabolic sub-bundle $\Ff\subset \Ee$ with $\parmu(\Ff)\geq \parmu(\Ee)$ such that the quotient  $\Qq:=\Ee/\Ff$ is also semistable. Let $\beta=(\beta_{x,0},\dots,\beta_{x,r_x})_{x\in D}$ (resp. $\gamma=(\gamma_{x,0},\dots,\gamma_{x,r_x})$  denote the weights for $\Ff$ (resp. $\Qq$). Note that by allowing the lengths to be the same as those for $\Ee$, we are allowing for $\beta_{x,j}=\beta_{x,j+1}$, but this implies that $F_{x,j}=F_{x,j+1}$. A similar statement holds for $\Qq$. Then, $H^0(\mathrm{PHom}(\Ff,\Qq))\geq 0$\footnote{Note that equality holds if $\parmu(\Ff)>\parmu(\Qq)$, as otherwise semistability of both $\Ff$ and $\Qq$ cannot hold.}, so by Serre duality \cite[Prop 3.7]{YokoInf} for parabolic sheaves $H^1(\mathrm{SPHom}(\Qq,\Ff)K(D))\geq0$. Here $\mathrm{PHom}(\Ff,\Qq)$ (respectively 
    $\mathrm{SPHom}(\Qq,\Ff)$) is the sheaf of morphisms inducing $Q_{x,j}\longrightarrow F_{x,k-1}$ when $\gamma_{x,j}<\beta_{x,k}$ (resp. $\gamma_{j}\leq\beta_{k}$). On the other hand, by \cite[Theorem 3.10]{DV}
    \begin{equation}
        \label{eq:RR}-\chi(\mathrm{PHom}(\Ff,\Qq))=
            \end{equation}
\[-\pardeg(\mathrm{PHom}(\Ff,\Qq))+(g-1)\rk(\mathrm{PHom}(\Qq,\Ff))-\sum_{x\in D}\sum_{j=1}^{\tilde{r}_{x}}\tilde{\alpha}_{x,j}\tilde{m}_{x,j}.    \]
    In the above, the numbers $\tilde{r}_{x}$, $\tilde{\alpha}_{x}$  and $\tilde{m}_{x}$ are the corresponding data associated to the induced filtration for $\mathrm{PHom}(\Ff,\Qq)$, which are the homomorphisms in the category of parabolic sheaves \cite{Borne, YokoInf}. We note that our expression of the Euler characteristic \eqref{eq:RR} varies slightly from the one in \cite[Theorem 3.10]{DV}, the reason being that in {\it loc.cit.} weights are taken to be positive, while we consider negative weights. With these considerations, formula \eqref{eq:RR} follows directly from \cite[Theorem 3.10]{DV} by the equivalence of categories from \cite[Theorem 3.13]{Borne}.

    The result is straightforward from \eqref{eq:RR} by two remarks. Firstly, the following inequalities hold:
    \[
h^0(\mathrm{SPHom}(\Qq,\Ff)\otimes K(D))\geq-\chi(\mathrm{PHom}(\Ff,\Qq))\geq (g-1)\rk(\Qq)\rk(\Ff)>0.
    \]
 where the first inequality follows from Serre duality and $h^0(\mathrm{PHom}(\Ff,\Qq))\geq 0$. Secondly,  the morphism
    \[
\Ee\twoheadrightarrow \Qq\stackrel{\psi}{\longrightarrow}\Ff\otimes K(D)\hookrightarrow \Ee\otimes K(D)
    \]
    is strongly parabolic for any $\psi\in H^0(\mathrm{SPHom}(\Qq,\Ff)\otimes K(D))$.
\end{proof}
\begin{remark}\label{rk:SWW}
    In \cite[Theorem 6.14]{SWW}, a proof of the existence of a Zariski open set of the moduli space of parabolic bundles which is contained in the set of strongly very stable parabolic bundles is given using a dimensional argument. Lemma \ref{lm:vs_is_stable} improves this result (provided non emptiness of the very stable locus) by showing that all strongly very stable parabolic bundles are stable. Non emptiness follows from \cite[Theorem 3.1]{L} (see \S \ref{sec:rational_weights}), or, alternatively, from Lemma \ref{lm:wobbly}.
\end{remark}    
\begin{remark}\label{rk:wobbly_alpha}
	Note that the proof of Lemma \ref{lm:vs_is_stable} uses the identification of parabolic bundles with bundles on orbicurves, and this requires the assignment of weights. However, since  the definition of (strong) very stability is independent of $\alpha$ (cf. Remark \ref{rk:vs_indep_weight}), from Lemma \ref{lm:vs_is_stable} we deduce the existence of a common subset to all $\N(\Pp,\alpha)$, namely, the set of very stable bundles $\Ee$. The corresponding wobbly locus, however, depends on the stability parameter $\alpha$. It follows that the moduli spaces of parabolic bundles for different weights provide different compactifications of the very stable locus.
\end{remark}{}
\begin{theorem}\label{thm:st_vs_iff_proper}
	Let $\Ee\in\N(\Pp,\alpha)$ be stable. Let 
	$\Vst$ be the space of strongly parabolic Higgs fields on $\Ee$, and let $h_{\Ee,st}:=h_{\Pp,\alpha}|_{\Vst}$. Then:
	$$
	\begin{array}{lcl}
	\Ee \textnormal{ \it is strongly very stable } &\iff&  h_{\Ee,st} \textnormal{ \it is finite }\\
	&\iff& h_{\Ee,st} \textnormal{ \it is quasi-finite } \\
	&\iff&  h_{\Ee,st} \textnormal{ \it is proper }\\
	&\iff&  \Vst\plonge \M(\Pp,\alpha)  \textnormal{ \it is proper.}
	\end{array}
	$$
\end{theorem}
\begin{proof}
	The fact that very stability implies finiteness follows from affineness of $\Vst$ and Lemma \ref{lm:Hacen}. 
	
	To see the other implications, we need to work some more. Let $\H_{st}\subset \H_D$ be the image of $\Ms$ under $h_{\Pp,\alpha}$. By \cite[Theorem 36]{BK}, $\H_{st}\subset\H_D$ is an affine subspace of dimension $\dim \H_{st}=\dim\N(\Pp,\alpha)$.

Also by \cite[Theorem 5.3]{MS}, $\dim\N(\Pp,\alpha)=\dim{H^0(X,\SPEnd(\Ee)\otimes K(D))}$ whenever $\Ee$ is stable.

Now, finiteness implies properness and quasi-finiteness. Also, by the above discussion $h_{\Ee,st}$ is a map of finite type of affine spaces of the same dimension, hence properness implies quasi-finiteness and hence also finiteness.  

Regarding the equivalence of quasi-finiteness and finiteness, quasi-finiteness implies very stability (and thus finiteness) as the existence of a non-zero nilpotent Higgs field $\varphi$ on $\Ee$ would automatically produce a one dimensional subspace in $h^{-1}_{\Ee,st}(0)$ (this requires the stability hypothesis on $\Ee$ to make sure none of the Higgs fields in the line $\CC^\times\cdot \varphi$ are identified). 

Finally,  the equivalence between properness of $\Vst\plonge \M(\Pp,\alpha)$ and properness of $h_{\Ee,st}$ is a consequence of the valuative criterion for properness (this is exactly the proof of \cite[Proposition 2.2]{PP}).
\end{proof}{}

The following result was observed in \cite[Corollary 7.3]{FGOP}, and extends verbatim to the current context.
\begin{corollary}\label{cor:lagrangian_multi}Let $\M(\Pp,\alpha)_{st}$  denote the strongly parabolic symplectic leaf, and let 
$h_{\Pp,\alpha,st}:=h_{\Pp,\alpha}|_{\M(\Pp,\alpha)_{st}}$. If $\Ee$ is strongly very stable, then $\V_{\Ee,st}$ is a Lagrangian multisection of  $h_{\Pp,\alpha,st}$.
\end{corollary}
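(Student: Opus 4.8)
The plan is to read off everything from Proposition~\ref{prop:st_vs_iff_proper} together with the cotangent description of the strongly parabolic leaf recalled in Section~\ref{sec:Poisson}. First I would use Lemma~\ref{lm:vs_is_stable} to note that a strongly very stable $\Ee$ is automatically $\alpha$-stable, so that $\V_{\Ee,st}=H^0(X,\SPEnd(\Ee)\otimes K(D))$ is, by Serre duality and \eqref{eq:T_N}, exactly the cotangent fibre $T^*_{\Ee}\N(\Pp,\alpha)$; in particular $\V_{\Ee,st}$ is contained in the dense open subset $T^*\N(\Pp,\alpha)^s\subset\M(\Pp,\alpha)_{st}$ \cite[Remark 5.1]{YokoInf}.

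Next, I would check that $\V_{\Ee,st}$ is Lagrangian as a submanifold of $\M(\Pp,\alpha)_{st}$. It is a linear subspace, hence smooth, and it sits inside $T^*\N(\Pp,\alpha)^s$, on which the symplectic form induced by the Poisson map \eqref{eq:poisson_map} coincides with the canonical cotangent symplectic form. A cotangent fibre is an isotropic subspace for the canonical form, so $\V_{\Ee,st}$ is isotropic; and it has the right dimension, since by \eqref{eq:dim_N} and the equality $\dim\N(\Pp,\alpha)=\dim H^0(X,\SPEnd(\Ee)\otimes K(D))$ used in the proof of Proposition~\ref{prop:st_vs_iff_proper} one has $\dim\V_{\Ee,st}=\dim\N(\Pp,\alpha)=\tfrac12\dim\M(\Pp,\alpha)_{st}$. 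Hence $\V_{\Ee,st}$ is a Lagrangian submanifold of the leaf.

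It remains to verify the multisection property, i.e.\ that the restriction $h_{\Pp,\alpha,st}|_{\V_{\Ee,st}}=h_{\Ee,st}$ is finite and surjective onto the base $\H_{st}=\image(h_{\Pp,\alpha,st})$. Finiteness is precisely the content of Proposition~\ref{prop:st_vs_iff_proper} under the strong very stability hypothesis. Since a finite morphism is closed and preserves dimension, the image of $h_{\Ee,st}$ is a closed subvariety of $\H_{st}$ of dimension $\dim\V_{\Ee,st}=\dim\H_{st}$, so by irreducibility of $\H_{st}$ it is all of $\H_{st}$; thus $h_{\Ee,st}$ is finite and surjective, meeting every fibre of $h_{\Pp,\alpha,st}$ in a nonempty finite set. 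This is exactly the assertion that $\V_{\Ee,st}$ is a Lagrangian multisection of $h_{\Pp,\alpha,st}$.

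The only point that is not pure bookkeeping is the identification, on $T^*\N(\Pp,\alpha)^s$, of the canonical cotangent symplectic form with the one coming from the Poisson structure of Section~\ref{sec:Poisson}; once that is cited, the argument is formal, the real content having already been absorbed into Proposition~\ref{prop:st_vs_iff_proper}. I therefore expect no serious obstacle beyond a careful reference to the Poisson geometry recalled earlier, which is why the result \emph{extends verbatim} from \cite[Corollary 7.3]{FGOP}.
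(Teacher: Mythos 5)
Your proposal is correct and follows essentially the same route as the paper's proof: isotropy via the identification of $\V_{\Ee,st}$ with a cotangent fibre (the paper phrases this as ``the only deformations along $\Vst$ concern the Higgs field''), the dimension count $\dim\Vst=\dim\N(\Pp,\alpha)=\tfrac12\dim\M(\Pp,\alpha)_{st}$ via Lemma~\ref{lm:vs_is_stable}, \eqref{eq:T_N} and density of $T^*\N(\Pp,\alpha)^s$ in the leaf, and surjectivity of the finite map $h_{\Ee,st}$ onto the affine, equidimensional image $\H_{st}$ from \cite[Theorem 36]{BK}. No gaps.
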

\begin{proof}
Since the only deformations along $\Vst$ concern the Higgs field (see Section \ref{sec:Poisson}), $\Vst$ is clearly isotropic. 

By Lemma \ref{lm:vs_is_stable}, $\Ee$ is stable, and so from \eqref{eq:T_N} and Serre duality $T_\Ee\N(\Pp,\alpha)=H^0(X,\SPEnd(\Ee)\otimes K(D))^*$.  Thus, 
$
\dim\Vst=\dim\N(\Pp,\alpha)^s\subset \Ms$.
But $T^*\N(\Pp,\alpha)\subset\Ms$ is dense, so it follows that 
 $\dim\Vst=\dim\N(\Pp,\alpha)=\frac{1}{2}\dim\Ms$, and so $\Vst$ is maximal dimensional.

 Finally, $\Im(h_{\Pp,\alpha,st})$ is affine by \cite[Theorem 36]{BK} of dimension equal to the dimension of $\Vst$. This, together with properness, yields that $h_{\Ee,st}$ is onto, hence the result. 
\end{proof}{}
\begin{remark}
	Note that the stability assumptions in Theorem \ref{thm:st_vs_iff_proper} and Theorem \ref{thm:vs_iff_proper} may  be dropped in one direction by  Lemma \ref{lm:vs_is_stable}.  
\end{remark}{}
Let
\begin{equation}\label{eq:h_V}
h_{\Ee,nilp} : \Vn \longrightarrow \H_D    
\end{equation}{}
be the restriction of the Hitchin map to the vector space $\Vn:=H^0(X,\PEnd(\Ee)\otimes K(D))_{nilp}$ of Higgs fields on $\Ee$ with nilpotent residue. In order to prove the analogue to Theorem \ref{thm:st_vs_iff_proper} in this setup, let us start by two preliminary results about $\Vn$.
\begin{lemma}\label{lm:oVn_minus_Vn}
	Let $\Ee$ be a stable parabolic bundle, and let $(\Ff,\psi)\in\oVn\setminus\Vn$ or $(\Ff,\psi)\in\oVE\setminus\VE$. Then 
	
	\begin{enumerate}
	\item\label{it:F_uns} $\Ff$ is unstable. 
	
	\item\label{it:oVE_vs_oVn} $\oVn\setminus\Vn\neq\emptyset\iff\oVE\setminus\VE\neq\emptyset$.
\end{enumerate}
\end{lemma}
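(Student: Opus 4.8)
The plan is to use the properness of the Hitchin map $h_{\Pp,\alpha}$ together with the $\CC^\times$-action to control what can happen in the boundary $\oVn\setminus\Vn$ (and likewise $\oVE\setminus\VE$). First I would set up the geometry: the vector space $\VE=H^0(X,\PEnd(\Ee)\otimes K(D))$ sits inside $\M(\Pp,\alpha)$ as a locally closed subset via the stability of $\Ee$ (Remark \ref{rk:VO_as_intersection}), and the closure $\oVE$ is taken inside $\M(\Pp,\alpha)$; a point $(\Ff,\psi)\in\oVE$ that is not already in $\VE$ cannot have $\Ff\cong\Ee$ (up to isomorphism, since $\VE$ records all parabolic Higgs fields on $\Ee$), so the only way to land in the boundary is for the underlying parabolic bundle to degenerate to some $\Ff\not\cong\Ee$, which — being a limit of the fixed bundle $\Ee$ under Higgs-field deformations — must be strictly semistable or properly related to $\Ee$ by a degeneration; I would then invoke the separatedness of $\N(\Pp,\alpha)$ to argue that no \emph{stable} bundle other than $\Ee$ can arise this way, so $\Ff$ is either strictly semistable or unstable. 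To push this to unstable, I would use the $\CC^\times$-action: $\oVE$ and $\oVn$ are $\CC^\times$-invariant (scaling the Higgs field preserves nilpotency of the residue, and preserves the underlying bundle), so $\lim_{\lambda\to 0}\lambda\cdot(\Ff,\psi)$ exists and lies in the nilpotent cone while keeping $\Ff$ fixed; since $(\Ff,0)\in\oVE$ would force $\Ff$ to be a limit of $\Ee$'s \emph{with zero Higgs field}, i.e.\ a point of $\ol{\{\Ee\}}\subset\N(\Pp,\alpha)$, and since $\Ee$ is stable hence its orbit closure in $\N(\Pp,\alpha)$ is a point, we would get $\Ff\cong\Ee$, a contradiction unless $(\Ff,0)\notin\M(\Pp,\alpha)$, i.e.\ $\Ff$ is unstable. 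This gives (\ref{it:F_uns}).

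For part (\ref{it:oVE_vs_oVn}), the implication $\oVn\setminus\Vn\neq\emptyset\Rightarrow\oVE\setminus\VE\neq\emptyset$ is immediate since $\Vn\subseteq\VE$ and $\oVn\subseteq\oVE$, so a boundary point of $\Vn$ inside $\oVn$ is in particular a point of $\oVE$; one only has to check it is not in $\VE$, which follows because any point of $\VE$ has underlying bundle $\cong\Ee$ which is stable, contradicting (\ref{it:F_uns}). For the converse, given $(\Ff,\psi)\in\oVE\setminus\VE$, I would take a one-parameter family $(\Ee,\varphi_t)\in\VE$, $t\neq 0$, degenerating to $(\Ff,\psi)$ as $t\to 0$, and then apply the $\CC^\times$-action pointwise: replacing $\varphi_t$ by $\lambda(t)\varphi_t$ for a suitable $\lambda(t)\to\infty$ chosen so that the limit Higgs field becomes nilpotent — here one uses that the Hitchin image of a fixed bundle $\Ee$ is a cone (the fibers of $h_{\Ee}$ over rays are $\CC^\times$-stable) and that $h_{\Pp,\alpha}$ is proper, so limits exist — one produces a family inside $\Vn$ whose limit is a boundary point of $\Vn$, again with unstable underlying bundle by the argument above, hence genuinely in $\oVn\setminus\Vn$. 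Alternatively, and perhaps more cleanly, one observes that $\oVE=\overline{\CC^\times\cdot\Vn}$ because every Higgs field on $\Ee$ can be scaled and its $\lambda\to\infty$ limit (which exists only when nilpotent) relates the two; the precise statement to extract is that $\oVE\setminus\VE$ and $\oVn\setminus\Vn$ are simultaneously empty.

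The main obstacle I anticipate is the converse direction of (\ref{it:oVE_vs_oVn}): one must genuinely produce a \emph{nilpotent} degeneration from an arbitrary one, and the naive rescaling $\lambda(t)\to\infty$ only has a limit when the rescaled Higgs fields stay in a bounded region of $\M(\Pp,\alpha)$, which is where properness of $h_{\Pp,\alpha}$ and the $\CC^\times$-equivariance of the Hitchin map (with the weighted action on $\H_D$) must be used carefully — one needs the characteristic polynomial of $\lambda(t)\varphi_t$ to converge, forcing a relation between $\lambda(t)$ and the vanishing order of $h_{\Pp,\alpha}(\Ee,\varphi_t)$ as $t\to 0$, and then a limiting argument (valuative criterion, as in \cite[Proposition 2.2]{PP}) to conclude the limit lands in the nilpotent locus. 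A secondary subtlety is ensuring the limit point $\Ff$ is unstable rather than merely strictly semistable; this is handled by the $\lambda\to 0$ argument above which sends $(\Ff,\psi)$ to $(\Ff,0)$ and uses that a properly semistable $\Ff$ would make $(\Ff,0)$ an S-equivalence class distinct from $\Ee$ yet a limit of $\Ee$'s — contradicting that $\Ee$, being stable, is a closed point whose only degeneration with zero Higgs field is itself. I would write this last step invoking separatedness of $\N(\Pp,\alpha)$ explicitly.
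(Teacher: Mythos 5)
Your part (\ref{it:F_uns}) identifies the right tool --- separatedness of $\N(\Pp,\alpha)$ applied to $r$ composed with a one-parameter degeneration from $\VE$ to $(\Ff,\psi)$ --- and this is exactly how the paper argues. However, you only use it to exclude \emph{stable} $\Ff\not\cong\Ee$, and then treat the strictly semistable case with a $\CC^\times$-limit argument that does not hold up: the assertion that $(\Ff,0)\in\oVE$ ``forces $\Ff$ to be a limit of $\Ee$'s with zero Higgs field'' is unjustified, since the approximating points $(\Ee,\varphi_i)$ need only have characteristic polynomials tending to $0$, not $\varphi_i=0$ (this is precisely what happens along the non-trivial components of the nilpotent cone through a wobbly bundle). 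The detour is also unnecessary: running the separatedness argument over a discrete valuation ring whose generic point maps into $\VE$ and whose closed point maps to $(\Ff,\psi)$ already excludes \emph{every} semistable $\Ff$, because a strictly semistable bundle is S-equivalent to a properly polystable one and so its point of $\N(\Pp,\alpha)$ can never coincide with that of the stable bundle $\Ee$.

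The genuine gap is the converse direction of (\ref{it:oVE_vs_oVn}), which you yourself flag as the main obstacle and do not close. The rescaling strategy cannot work as stated: multiplying a Higgs field by a nonzero scalar never changes whether its residue along the flag is nilpotent, so if the approximating fields $\varphi_t$ are not already in $\Vn$, no choice of $\lambda(t)$ places $\lambda(t)\varphi_t$ in $\Vn$, and a degeneration of a family lying outside $\Vn$ produces no point of $\oVn$. The fallback identity $\oVE=\overline{\CC^\times\cdot\Vn}$ is false: $\Vn$ is a cone, so $\CC^\times\cdot\Vn=\Vn$ and the right-hand side is just $\oVn$, in general a proper subset of $\oVE$. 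The paper's argument is shorter and flows the \emph{limit point} rather than the approximating family: given $(\Ff,\psi)\in\oVE\setminus\VE$, the limit $(\Ff_0,\psi_0):=\lim_{t\to 0}t\cdot(\Ff,\psi)$ exists by properness of the Hitchin map and lies in $h_{\Pp,\alpha}^{-1}(0)\cap\oVE\subset\Mn\cap\oVE=\oVn$ (using that $\oVE$ is closed and $\CC^\times$-invariant and that the nilpotent cone has nilpotent residues), while its underlying bundle is unstable because $\Ff$ is unstable by part (\ref{it:F_uns}) and instability of the underlying bundle is a closed condition; hence $(\Ff_0,\psi_0)\in\oVn\setminus\Vn$.
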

\begin{proof}
	By  \cite[\href{https://stacks.math.columbia.edu/tag/0A24}{Tag 0A24}]{stacks-project}, given $(\Ff,\psi)\in\oVn\setminus\Vn$, there exists a discrete valuation ring $R$ and a morphism 
	$$
	\iota_F:C:=\Spec(R)\longrightarrow \M(\Pp,\alpha)
	$$
	such that the generic point $\Spec(k)$ (where $k$ is the fraction field of $R$) maps to $\Vn$,  while the closed point $o$ goes to $(\Ff,\psi)\in \ol{\V}_{\Ee,nilp}\setminus{\Vn}$. Since $\Mn\subset\M(\Pp,\alpha)$ is closed, then $\oVn\setminus\Vn\subset\M(\Pp,\alpha)\setminus\VE$, namely, it must be $\Ff\neq\Ee$. So it is enough to prove the statement for $(\Ff,\psi)\in\oVE$.
	
	 If $\Ff$ were semistable, composing with $r$ would yield 
	\[r\circ \iota_F:C:=\Spec(R)\longrightarrow \N(\Pp,\alpha)\] non constant, extending the constant map $\Ee:\Spec(k)\longrightarrow \N(\Pp,\alpha)$, thus violating separatedness of $\N(\Pp,\alpha)$ \cite[Theorem 4.1]{MS}. This proves \eqref{it:F_uns}.
	
	For \eqref{it:oVE_vs_oVn}, note that $\oVn\setminus\Vn\subset\oVE\setminus\VE$ (by \eqref{it:F_uns}, or simply by closedness of $\Mn$). So it is enough to prove the converse. 
	
	Let $(\Ff,\psi)\in\oVE\setminus\VE$. Then, by equivariance of the Hitchin map $(\Ff_0,\psi_0):=\lim_{t\to 0}t\cdot (\Ff,\psi)\in h^{-1}_{\Pp,\alpha}(0)\cap\oVE\subset\Mn\cap\oVE=\oVn$.
\end{proof}
\begin{remark}
	The above proof translates essentially verbatim to the simplest non parabolic vector bundle setup, and corrects a mistake in the proof of \cite[Proposition 2.3]{PP}. Indeed, the  \'etale local family considered therein may not exist away from $T^*\N^s(n,d)$. Thanks to T. Hausel for pointing this error out to us.
\end{remark}
The following adapts \cite[Theorem 36]{BK} to general nilpotent Higgs fields.
\begin{proposition}\label{prop:image_V_nilp}
	Let $\Ee$ be a stable parabolic bundle. Then, the image under the Hitchin map $h_{\Pp,\alpha}$ of $\Vn$ is contained  in an affine space $\AA^d$ with $d=\dim\Vn$. 
\end{proposition}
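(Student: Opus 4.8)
The plan is to show that $h_{\Pp,\alpha}(\Vn)$ lies in an affine subspace of $\H_D$ of dimension equal to $\dim\Vn$, by combining the known result for strongly parabolic Higgs fields (Baraglia--Kamgarpour, \cite[Theorem 36]{BK}) with the observation from Remark \ref{rk:Hodge_bundle_str_par} that a nilpotent quasi-parabolic Higgs field becomes strongly parabolic after refining the flags. First I would make this refinement precise: given $(\Ee,\varphi)$ with $\varphi\in\Vn$, the residue $\varphi_x$ is nilpotent in $\mathfrak{p}_x$ and induces a nilpotent $\gr(\varphi_x)$ on $\gr(E_{x,\bullet})$; refining each flag $E_{x,\bullet}$ to a flag $E'_{x,\bullet}$ adapted to the kernel filtration of $\gr(\varphi_x)$ produces a quasi-parabolic bundle $\Ee'$ of a new flag type $\Pp'$ on which $\varphi$ is strongly parabolic, i.e. $\varphi\in\V_{\Ee',st}=H^0(X,\SPEnd(\Ee')\otimes K(D))$. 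Since the Hitchin map only depends on the characteristic polynomial of $\varphi$ as an endomorphism of $E$, the images agree: $h_{\Pp,\alpha}(\Ee,\varphi)=h_{\Pp',\alpha'}(\Ee',\varphi)$ for any compatible weights $\alpha'$ on $\Ee'$.

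Next I would assemble the global statement. The subtlety is that different $\varphi\in\Vn$ may require different refinements, so $\Vn$ is stratified by the conjugacy type of $\gr(\varphi_x)$ at each puncture; there are finitely many such strata, indexed by tuples of nilpotent orbits $\Oo=\prod_x\Oo_x$ in $\prod_x\mathfrak{l}_x$. For each such $\Oo$ one gets a refined flag type $\Pp_\Oo$, and $h_{\Pp,\alpha}(\V_{\Ee,\Oo})\subseteq h_{\Pp_\Oo,\alpha_\Oo}(\V_{\Ee_\Oo,st})$, which by \cite[Theorem 36]{BK} is an affine subspace $\H_{st,\Oo}\subseteq\H_D$ of dimension $\dim\N(\Pp_\Oo,\alpha_\Oo)=\dim H^0(X,\SPEnd(\Ee_\Oo)\otimes K(D))$ (using stability of $\Ee_\Oo$, which follows from stability of $\Ee$ since refining flags only enlarges the set of competing subsheaves; alternatively invoke Lemma \ref{lm:vs_is_stable}'s circle of ideas). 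The maximal such stratum is the regular nilpotent one $\Oo=\Oo_{rn}$, whose closure is all of $\Vn$; for that stratum the refinement is to a full flag and $\dim H^0(X,\SPEnd(\Ee_{\Oo_{rn}})\otimes K(D))=\dim\Vn$ — indeed, from the exact sequence \eqref{eq:SPEnd} one computes $\dim\Vst - \dim\Vn = \dim\mathfrak{n}^{|D|}_{\text{extra}}$, and a direct count at each puncture shows these dimensions coincide, since passing from nilpotent-residue to strongly-parabolic along a full refinement adds exactly the dimension lost in $\gr$.

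Finally, since $h_{\Pp,\alpha}$ is continuous (indeed a morphism of schemes) and $\V_{\Ee,\Oo_{rn}}$ is dense in $\Vn$, we get $h_{\Pp,\alpha}(\Vn)=h_{\Pp,\alpha}(\overline{\V_{\Ee,\Oo_{rn}}})\subseteq\overline{h_{\Pp,\alpha}(\V_{\Ee,\Oo_{rn}})}\subseteq\overline{\H_{st,\Oo_{rn}}}=\H_{st,\Oo_{rn}}$, the last equality because an affine subspace is closed. This exhibits $h_{\Pp,\alpha}(\Vn)$ inside an affine space of dimension $\dim\Vn$, as claimed. The main obstacle I anticipate is the bookkeeping in the second paragraph: verifying that the refined bundle $\Ee_\Oo$ remains stable for a suitable choice of refined weights $\alpha_\Oo$, and pinning down that the dimension of the strongly parabolic Higgs space for the \emph{full} refinement equals $\dim\Vn$ exactly (rather than merely bounding it) — this requires a careful local computation at each puncture comparing $\dim\mathfrak{n}_x$, the nilpotent radical data, and the dimension of the space of nilpotent elements of $\mathfrak{p}_x$ modulo the refinement. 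Everything else is a formal consequence of \cite[Theorem 36]{BK}, density, and closedness of affine subspaces.
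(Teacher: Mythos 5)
Your overall strategy --- refine the flags so that a nilpotent-residue Higgs field becomes strongly parabolic, then quote \cite[Theorem 36]{BK} for the refined flag type --- is exactly the route the paper takes, but two of your concrete steps have genuine gaps. First, you assert that the dense stratum of $\Vn$ is the regular nilpotent one $\Oo_{rn}$, so that the relevant refinement is a full flag. This is false in general: $\V_{\Ee,\Oo_{rn}}$ may be empty (Corollary \ref{cor:multisection_nilp} is phrased precisely to allow a leaf ``possibly of lower dimension than the regular nilpotent''). The paper instead takes a nilpotent orbit $\Oo$ of maximal dimension among those with $\VO\neq\emptyset$ and \emph{proves} that $\ol{\VO}=\Vn$, using linearity of $\VE$ (so that $\VO+\V_{\Ee,\Oo'}\subset\Vn$ for any two such orbits) together with a Jordan-form argument to exclude two incomparable maximal orbits, and then irreducibility of $\Vn$. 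Without this step you do not have a single refinement $\Pp'$ adapted to a dense subset of $\Vn$, and your final density argument has nothing to feed on.

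Second, the two points you defer to the end --- stability of the refined bundle and the exact equality $\dim H^0(\SPEnd(\Ee')\otimes K(D))=\dim\Vn$ --- are precisely where the work lies, and your proposed fixes do not go through as stated. Refining the flag changes both the test sub-objects and the weights, so stability of $\Ee$ does not automatically transfer to $\Ee'$ for any choice of $\alpha'$ (if anything, more flag steps make stability harder to satisfy); the paper sidesteps this entirely by passing to the moduli \emph{stack} $\Mm(\Pp')$ and its Hitchin map, noting explicitly that the dimension identity is ``independent of the existence of a parameter $\alpha'$ turning $\Ee'$ into a stable parabolic bundle''. The dimension count itself is not a local computation at the punctures: the paper identifies $\PEnd(\Ee)_{\mathbf{O}}\cong\SPEnd(\Ee')$, sandwiches $\VO\subset H^0(\PEnd(\Ee)_{\mathbf{O}}\otimes K(D))\subset\Vn$ to force the second inclusion to be an equality of affine spaces, and obtains $H^0(\PEnd(\Ee)_{\mathbf{O}})=0$ from stability of the \emph{original} $\Ee$ via Lemma \ref{lm:no_nilp_par_end}; Riemann--Roch then gives $\dim\Vn=\dim\N(\Pp',\alpha')$, which is exactly the dimension \cite[Theorem 36]{BK} assigns to the affine image. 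Your closing paragraph (density of the open stratum plus closedness of an affine subspace) is fine once these two inputs are supplied.
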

\begin{proof}
	First note, that  $\Vn=\ol{\MO}\cap \VE$ for some nilpotent orbit $\Oo$ such that $\VO:=\MO\cap\VE\neq\emptyset$. Indeed, take a maximal dimensional orbit satisfying $\VO\neq\emptyset$ . If $\Vn\neq\ol{\MO}\cap \VE$, then there exists another nilpotent orbit $\Oo'$  with $\V_{\Ee,\Oo'}\neq\emptyset$ and $\V_{\Ee,\Oo'}\not\subset \ol{\VO}$. By affineness of $\VE$, then $\VO+\V_{\Ee,\Oo'}\subset\Vn$. Now, using the Jordan canonical form, we see that either $\Oo'\subset\ol{\Oo}$ or there is an element in $\VO+\V_{\Ee,\Oo'}$ belonging to a higher dimensional orbit, which contradicts our choice of $\Oo$. By irreducibility of $\Vn$, one concludes that $\Vn=\ol{\MO}\cap \VE$.
	
	 Since $\Vn\cong\mathbb{A}^d$ is irreducible, the closure of $\VO$ inside $\VE$ must be all of $\Vn$. Now,  for some refinement $\Pp'<\Pp$,  the orbit 
	$\Oo$ intersects  $\mathfrak{l}:=\prod_{x\in D}\mathfrak{l}'_x$ (where $\mathfrak{l}'_x$ is the Levi subalgebra of $\mathfrak{p}_x'=Lie(\Pp_x')$) at $0$. 
	 For example, taking $\Pp'$ determined by the iterated kernels of $\varphi_x\in\Oo$, then  $\Pp'$ satisfies the property and moreover, it is maximal for it (namely, whenever $\ol{\Pp}$ is another refinement with $\varphi$ lifting to a strongly nilpotent field preserving a flag of type $\ol{\Pp}$, then $\ol{\Pp}<\Pp'$).  
	 
	 Let $\mathfrak{p}_x'=\mathfrak{l}_x'\oplus\mathfrak{n}_x'$, and let $\mathbf{O}:=\prod_{x\in D}\mathfrak{n}_x'\cap\mathfrak{l}_x$. Define $\PEnd(\Ee)_{\mathbf{O}}\subset\PEnd(\Ee)$ via the following exact sequence: 
	 $$
	 0\longrightarrow\PEnd(\Ee)_{\mathbf{O}}\longrightarrow\PEnd(\Ee)\longrightarrow\mathfrak{l}/\mathbf{O}\longrightarrow0,
	 $$
	where the rightmost arrow is the composition of 
	$\PEnd(\Ee)\projects \mathfrak{p}\projects \mathfrak{l}\projects \mathfrak{l}/\mathbf{O}$.
	 
	 Now, let $\Ee'$ denote the quasi-parabolic bundle  of type $\Pp'$ induced by the existence of some Higgs field in $\Vn$ with residue in $\Oo$. Note that the existence of the exact commutative diagram:
	 $$
	 \xymatrix{
	 	\SPEnd(\Ee')\ar@{^(->}[r]\ar@{^(->}[d]&\PEnd(\Ee')\ar@{^(->}[d]\ar@{->>}[r]&\mathfrak{l}'\ar@{^(->}[d]\\
	 	\PEnd(\Ee)_{\mathbf{O}}\ar@{^(->}[r]&\PEnd(\Ee)\ar@{->>}[d]\ar@{->>}[r]&\mathfrak{l}/\mathbf{O}\ar@{->>}[d]\\
	 	&\mathfrak{p}/\mathfrak{p}'\ar[r]_{\cong}&\mathfrak{l}/\mathfrak{p}'\cap\mathfrak{l}.
	 }
	 $$
	 implies that
	 \begin{equation}\label{eq:Par_O_SPar}
	 	\PEnd(\Ee)_{\mathbf{O}}\cong\SPEnd(\Ee').
	 \end{equation}
	 Note that $\VO\subset H^0(\PEnd(\Ee)_{\mathbf{O}}\otimes K(D))\subset\Vn$, thus, by affiness of the last two subspaces and equality of the dimensions of $\VO$ and $\Vn$,  the second inclusion must be an equality and so $\dim H^0(\PEnd(\Ee)_{\mathbf{O}}\otimes K(D))=\dim\Vn$. 
	 
	 Now, by stablity of $\Ee$ and Lemma \ref{lm:no_nilp_par_end},
we have that $H^0(\PEnd(\Ee)_\mathbf{O})=0=H^0(\SPEnd(\Ee'))$. A simple computation using \eqref{eq:SPEnd} shows that this implies that $\dim H^0(\SPEnd(\Ee')\otimes K(D))=\dim\N(\Pp',\alpha')$ (for a generic  $\alpha'$). Note that this equality holds unconditionally and does not require the existence of parabolic weights that would turn $\Ee'$ into a stable parabolic bundle. It then follows from the above and \eqref{eq:Par_O_SPar} that
\begin{equation}\label{eq:dim_VO}
\dim\Vn=\dim H^0(\SPEnd(\Ee')\otimes K(D))=\dim\N(\Pp',\alpha')
\end{equation}
	 for a suitable $\alpha'$.
	 
	 Let $\Mm(\Pp)$ be the moduli stack of parabolic Higgs bundles on $X$ of flag type $\Pp$. The Hitchin  map 
	 \[
	 h_\Pp:\Mm(\Pp)\longrightarrow\H_D
	 \]is defined in the same way. 
	  Moreover, there is a morphism $\Mm(\mathcal{P}')\longrightarrow \Mm(\mathcal{P})$ making the following diagram commute:
	\begin{equation}\label{eq:Hitchin_maps_neq_pars}
	\xymatrix{
		\Mm(\mathcal{P}')\ar[d]_{p}\ar[dr]^{h_{\mathcal{P}'}}&\\
	\Mm(\mathcal{P})\ar[r]_{h_{\mathcal{P}}}&\H_{D}.
	}
	\end{equation}
 In particular $h_{\Pp,\alpha}(\VO)\subset \Im(h_{\Pp',st})\subset\Im(h_{\Pp,st})$ which is affine of dimension equal to $\dim\VO$ by \cite[Theorem 36]{BK} and \eqref{eq:dim_VO}. 
\end{proof}

\begin{theorem}\label{thm:vs_iff_proper}
Let $\Ee$ be stable, and let $\Vn$ and $h_{\Ee,nilp}$ be as in \eqref{eq:h_V}. Then, 
$$
	\begin{array}{lcl}
	\Ee \textnormal{ \it is  very stable } &\iff&  h_{\Ee,nilp} \textnormal{ \it is finite }\\
	&\iff& h_{\Ee,nilp} \textnormal{ \it is quasi-finite } \\
	&\iff&  h_{\Ee,nilp} \textnormal{ \it is proper }\\
	&\iff&  \Vn\plonge \M(\Pp,\alpha)  \textnormal{ \it is proper}\\
		&\iff&  h_{\Ee} \textnormal{ \it is proper }\\
	&\iff&  \VE\plonge \M(\Pp,\alpha)  \textnormal{ \it is proper.}\\
	\end{array}
	$$
\end{theorem}{}
\begin{proof}
The equivalence between properness of $\Vn$ and $\VE$ follows from   Lemma \ref{lm:oVn_minus_Vn}. The equivalence between properness of $h_{\Ee,nilp}$ (resp.  $h_\Ee$) and properness of $\Vn$ (resp.  $\VE$) is a consequence of the valuative criterion for properness. Thus it is enough to prove all the other equivalences.

The implication that very stability of $\Ee$ implies finiteness of $h_{\Ee,nilp}$ follows from Lemma \ref{lm:Hacen}. 

To prove the remaining equivalences, we note that it is enough to check the equivalence of finiteness and properness of $h_{\Ee,nilp}$. Indeed, quasi-finiteness of  $h_{\Ee,nilp}$ automatically implies very stability of $\Ee$, as the existence of a nilpotent Higgs field implies the existence of a line of nilpotent Higgs fields. Thus,  quasi-finiteness is equivalent to finiteness by the first equivalence. 

Now, since finiteness of $h_{\Ee,nilp}$ implies properness of $h_{\Ee,nilp}$, it is enough to check that properness of $h_{\Ee,nilp}$ implies finiteness of $h_{\Ee,nilp}$, for which given that the Hitchin map is of finite type, it is enough to check that fibers are finite. This follows from Proposition \ref{prop:image_V_nilp}.
\end{proof}
The analogue of Corollary \ref{cor:lagrangian_multi} follows from Propositions \ref{prop:image_V_nilp} and \ref{thm:vs_iff_proper}: 

\begin{corollary}\label{cor:multisection_nilp}
	Let $\Ee$ be very stable, and let $\Oo$ be a nilpotent orbit such that $\VO\neq\emptyset$ is maximal dimensional inside $\Vn$. Then, $\Vn$ is the closure of a Lagrangian multisection of the restriction of  $h_{\Pp,\alpha}$ to $\MO$.
\end{corollary}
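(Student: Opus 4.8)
The natural candidate for the multisection is $\VO=\VE\cap\MO$ itself, so the plan is to show that $\VO$ is a Lagrangian multisection of $h_{\Pp,\alpha}|_{\MO}$ and that its closure is $\Vn$. The second point is recorded in the proof of Proposition \ref{prop:image_V_nilp}: there it is shown that $\Vn=\ol{\VO}$, and that for a suitable refinement $\Pp'$ of $\Pp$ one has $\Vn=H^0(X,\PEnd(\Ee)_{\mathbf{O}}\otimes K(D))$, a linear space with $\dim\Vn=\dim\N(\Pp',\alpha')=\dim\Im(h_{\Pp',st})$ by \eqref{eq:dim_VO} and \cite[Theorem 36]{BK}. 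In particular $\VO$ is an irreducible, dense, locally closed subset of $\Vn$, and it suffices to analyse $\VO$ inside the leaf $\MO$.

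The first thing to check is that $\VO$ is isotropic in the symplectic leaf $\MO$. This is exactly the argument of Corollary \ref{cor:lagrangian_multi}: moving along $\VO$ deforms only the Higgs field and not the underlying parabolic bundle (Section \ref{sec:Poisson}), so the tangent directions to $\VO$ lie in the kernel of the bundle–deformation map $\HH^1(C_\bullet)\to H^1(\PEnd(\Ee))$ of \eqref{eq:defs}, and the leaf form --- which by \eqref{eq:LES_hyper} is the one induced by Serre duality through \eqref{eq:poisson_map} --- restricts to zero on $T\VO$. Hence $\dim\VO\le\frac12\dim\MO$.

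It remains to show that this inequality is an equality and that $h_{\Pp,\alpha}$ maps $\VO$ onto a dense subset of the base of the integrable system on $\MO$. Since $\Ee$ is very stable, $h_{\Ee,nilp}$ is finite by Proposition \ref{prop:vs_iff_proper}; its restriction to the locally closed $\VO$ is then quasi-finite, so $\dim h_{\Pp,\alpha}(\VO)=\dim\VO$. On the other hand, the refinement step in the proof of Proposition \ref{prop:image_V_nilp} applies in the same way to \emph{every} point of $\MO$, not just to the Higgs fields on the fixed bundle $\Ee$: refining each flag by the iterated kernels of $\gr(\varphi_x)\in\Oo_x$ turns any $(\Ff,\varphi)\in\MO$ into a strongly parabolic Higgs bundle of the refined flag type $\Pp'$ (whose type depends only on $\Oo$, since the $\gr(\varphi_x)$ are conjugate to a fixed nilpotent), so $h_{\Pp,\alpha}(\MO)\subseteq\Im(h_{\Pp',st})$, an affine subspace of dimension $\dim\VO$. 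Therefore $\dim\ol{h_{\Pp,\alpha}(\MO)}=\dim\VO=\dim h_{\Pp,\alpha}(\VO)$, so $h_{\Pp,\alpha}(\VO)$ is dense in $\ol{h_{\Pp,\alpha}(\MO)}$ and, $h_{\Ee,nilp}$ being finite, $\ol{\VO}=\Vn$ maps finitely and surjectively onto it; thus $\VO$ is a multisection of $h_{\Pp,\alpha}|_{\MO}$. Finally, by complete integrability of the Hitchin system on symplectic leaves (\cite{LM,BK}, extended as indicated in Section \ref{sec:Poisson} to the nilpotent orbit $\Oo$) the base $\ol{h_{\Pp,\alpha}(\MO)}$ has dimension $\frac12\dim\MO$; combined with the isotropy step this forces $\dim\VO=\frac12\dim\MO$, so the isotropic multisection $\VO$ is Lagrangian. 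Since $\Vn=\ol{\VO}$, the corollary follows.

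I expect the crux to be the dimension identification in the last paragraph: one must control $h_{\Pp,\alpha}$ on \emph{all} of $\MO$ (rather than only on the fibre $\VO$ over $\Ee$, which is what Proposition \ref{prop:image_V_nilp} literally provides) in order to bound $\dim\ol{h_{\Pp,\alpha}(\MO)}$ from above, and one must know that the generic fibre of $h_{\Pp,\alpha}|_{\MO}$ is Lagrangian rather than merely coisotropic in order to bound it from below --- i.e. the mild but genuinely needed extension of \cite{LM,BK} from semisimple–regular and strongly parabolic orbits to an arbitrary nilpotent orbit $\Oo$.
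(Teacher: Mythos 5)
Your overall architecture matches the paper's: isotropy of $\VO$ because deformations along it only move the Higgs field, the multisection property from finiteness (Proposition \ref{prop:vs_iff_proper}) together with the affine bound on the image (Proposition \ref{prop:image_V_nilp}), and then everything reduces to the single equality $\dim\VO=\tfrac12\dim\MO$. The gap is exactly at the point you flag as the crux. To get the lower bound $\dim\ol{h_{\Pp,\alpha}(\MO)}\geq\tfrac12\dim\MO$ you invoke ``complete integrability of the Hitchin system on the leaf $\MO$'' for an arbitrary nilpotent orbit $\Oo$. That statement is not available in the cited sources: \cite{LM} treats semisimple regular orbits (and the paper only claims their arguments extend to non-semisimple \emph{regular} orbits), and \cite{BK} treats the strongly parabolic leaf; the orbit $\Oo$ of the corollary is a possibly non-regular nilpotent orbit determined by $\Ee$. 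Worse, the assertion that generic fibres of $h_{\Pp,\alpha}|_{\MO}$ are Lagrangian rather than merely coisotropic is essentially equivalent to the conclusion you are trying to reach --- Poisson-commutativity gives you the coisotropy (hence the upper bound $\dim\ol{h_{\Pp,\alpha}(\MO)}\leq\tfrac12\dim\MO$) for free, but the existence of enough independent Hamiltonians on this leaf is precisely what the corollary, once proved, supplies. So as written the argument is circular at its key step.

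The paper closes this gap by a direct computation that bypasses integrability altogether: from the long exact sequence \eqref{eq:LES_hyper} the rank of the Poisson map at $(\Ee,\varphi)$ with $\varphi|_D\in\Oo$ is $\rk(\omega)=\dim\M(\Pp,\alpha)-\dim\mathfrak{z}(\mathrm{gr}(\varphi))+1$, and identifying $\Ker[\mathrm{gr}(\varphi),\cdot]$ with the Levi $\mathfrak{l}'$ of the refined flag type $\Pp'$ from Proposition \ref{prop:image_V_nilp}, this evaluates via \eqref{eq:dim_N} and \eqref{eq:dim_VO} to $2(g-1)n^2+2+\sum_{x\in D}(n^2-\dim\mathfrak{l}'_x)=2\dim\VO$. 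Since $\dim\MO=\rk(\omega)$, the equality $\dim\VO=\tfrac12\dim\MO$ follows by pure linear algebra and cohomology, with no appeal to the structure of generic fibres. If you want to salvage your route, you would have to prove the integrability statement for nilpotent leaves first --- at which point you would in effect be redoing this rank computation. Your observation that the refinement argument applies to all of $\MO$ (giving $h_{\Pp,\alpha}(\MO)\subseteq\Im(h_{\Pp',st})$) is correct and a nice remark, but it only yields the upper bound on the image, which is not the direction that is in doubt.
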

\begin{proof} By  Propositions \ref{prop:image_V_nilp} and \ref{thm:vs_iff_proper}, together with isotropicity of $\VO$ (see Section \ref{sec:Poisson}), all we need to check is that:
	$$
	\dim\VO=\frac{1}{2}\dim\MO.
	$$
	By \eqref{eq:LES_hyper}, the rank of $\eta$ at  $(\Ee,\varphi)$ when $\varphi|_D\in\Oo$ is 
	$$
	\mathrm{\rk}(\eta)=\dim\M(\Pp,\alpha)-\dim\mathfrak{z}(\mathrm{gr}(\varphi))+1.$$
	Thus, taking $\mathfrak{l}'$ as in the proof of Proposition \ref{prop:image_V_nilp}, we have that $\Ker[\mathrm{gr}(\varphi),\cdot]\cong\mathfrak{l}'$, hence
	$$\mathrm{\rk}(\eta)=2(g-1)n^2+2+\sum_{x\in D} n^2-\dim\mathfrak{l}'_x	=2(\dim \VO),
	$$ 	
	where the last equality follows from \eqref{eq:dim_N} and \eqref{eq:dim_VO}.
\end{proof}

\subsection{Strongly wobbly bundles and the nilpotent cone}
In this section we describe strong wobbliness in terms of the geometry of the nilpotent cone. 

Let $C_i\subset h_{\Pp,\alpha,st}^{-1}(0)$, $i\in I$ denote the irreducible components of the strongly parabolic nilpotent cone. By \cite[Proposition 3.8]{L} one of them coincides with $\N(\Pp,\alpha)$, say  $C_0=\N(\Pp,\alpha)$, under the map $\Ee\mapsto(\Ee,0)$ for $\Ee\in\N(\Pp,\alpha)$. Let $C_0^s$ denote the interseccion of $C_0$ and the image of the stable locus $\N(\Pp,\alpha)^s\subset \N(\Pp,\alpha)$ under the aforementioned map. For vector bundles, $I$ is given by all possible partitions of the rank and degree of the same length satisfying a numerical condition \cite[Theorem 3.1]{Bozec}. These partitions correspond to ranks and degrees of the graded pieces of fixed point compotents (cf. \S\ref{sec:C_action}). In particular, $C_0^s$ corresponds to the trivial partition $(n,d)$.

\begin{lemma}\label{lm:wobbly}
The strong wobbly locus of $\N(\Pp,\alpha)$ is the intersection $\Wo_{st}:=\bigcup_{i\neq 0}C_i\cap C_0^s$.
\end{lemma}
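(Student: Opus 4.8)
The plan is to prove the two inclusions $\Wo_{st}\subseteq\bigcup_{i\neq 0}C_i\cap C_0^s$ and $\bigcup_{i\neq 0}C_i\cap C_0^s\subseteq\Wo_{st}$ separately. The first is immediate from the $\CC^\times$-action of Section \ref{sec:C_action}: if $\Ee$ is strongly wobbly it is $\alpha$-stable (by definition of wobbly), so $(\Ee,0)\in C_0^s$, and there is a non-zero $\varphi\in\Vst$. The pair $(\Ee,\varphi)$ lies in $h_{\Pp,\alpha,st}^{-1}(0)$, hence in some component $C_i$; as $\varphi\neq 0$, it is not of the form $(\Ee',0)$, so $i\neq 0$. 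Since the family $\lambda\mapsto(\Ee,\lambda\varphi)$ extends over $\lambda=0$ with value the stable Higgs bundle $(\Ee,0)$, we have $(\Ee,0)=\lim_{\lambda\to 0}\lambda\cdot(\Ee,\varphi)\in C_i$, because $C_i$ is closed and $\CC^\times$-invariant. Thus $(\Ee,0)\in C_i\cap C_0^s$ with $i\neq 0$.

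For the reverse inclusion I would argue by contraposition, showing: if $\Ee\in\N(\Pp,\alpha)^s$ is strongly very stable, then $(\Ee,0)\notin C_i$ for all $i\neq 0$. By Proposition \ref{prop:st_vs_iff_proper}, $h_{\Ee,st}$ is then finite, and being given by homogeneous polynomials on the affine space $\Vst$, its zero fibre is $\{(\Ee,0)\}$. The local input I would use is that $T^*\N(\Pp,\alpha)^s$ is a dense open, $\CC^\times$-invariant subvariety of $\Ms$ containing $(\Ee,0)$ (Section \ref{sec:Poisson}, \cite[Remark 5.1]{YokoInf}), on which the action is fibrewise scaling of the cotangent bundle $\pi\colon T^*\N(\Pp,\alpha)^s\to\N(\Pp,\alpha)^s$, and on which the restriction of $h_{\Pp,\alpha,st}$ to the cotangent fibre $T^*_\Ee\N(\Pp,\alpha)=\Vst$ is exactly $h_{\Ee,st}$.

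Granting this, suppose $(\Ee,0)\in C_i$ with $i\neq 0$, and put $Z:=C_i\cap T^*\N(\Pp,\alpha)^s$, a dense open, irreducible, $\CC^\times$-invariant subvariety of $C_i$ containing $(\Ee,0)$. Since $C_i$ is an irreducible component of the fibre $h_{\Pp,\alpha,st}^{-1}(0)$ and $\dim\Ms-\dim\H_{st}=\dim\N(\Pp,\alpha)$ (Section \ref{sec:Poisson}, \cite[Theorem 36]{BK}), one has $\dim Z=\dim C_i\geq\dim\N(\Pp,\alpha)$. On the other hand the fibre $\pi^{-1}(\Ee)=C_i\cap\Vst$ is contained in $h_{\Ee,st}^{-1}(0)=\{(\Ee,0)\}$, hence zero-dimensional; by upper semicontinuity of fibre dimension the generic fibre of $\pi|_Z$ is zero-dimensional, so $\dim\overline{\pi(Z)}=\dim Z\geq\dim\N(\Pp,\alpha)$ and $\pi(Z)$ is dense in $\N(\Pp,\alpha)$. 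Over a general point $\Ee'$, then, $\pi^{-1}(\Ee')=C_i\cap T^*_{\Ee'}\N(\Pp,\alpha)$ is a non-empty, zero-dimensional, $\CC^\times$-invariant subset of a vector space with the scaling action, hence equals $\{0\}$; so $Z$ agrees with the zero section over a dense open of $\N(\Pp,\alpha)$, and therefore $C_i=\overline{Z}\supseteq C_0$. As $C_i$ and $C_0$ are distinct irreducible components of the nilpotent cone, this is a contradiction, so no such $C_i$ exists. The main obstacle is the local identification in the second paragraph — matching, near $(\Ee,0)$, the Poisson moduli space with $T^*\N(\Pp,\alpha)^s$, its $\CC^\times$-action with fibrewise scaling, and the Hitchin map with $h_{\Ee,st}$ on each fibre — which is precisely what makes the conicality-plus-semicontinuity argument go through; the rest is dimension bookkeeping already recorded in Sections \ref{sec:prel} and \ref{sec:criteria}.
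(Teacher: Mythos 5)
Your proof is correct, and while the easy inclusion $\Wo_{st}\subseteq\bigcup_{i\neq 0}C_i\cap C_0^s$ coincides with the paper's (with one small wording slip: you must choose $\varphi\in\Vst$ non-zero \emph{and nilpotent} --- an arbitrary non-zero strongly parabolic Higgs field need not have vanishing characteristic polynomial, so need not lie in $h_{\Pp,\alpha,st}^{-1}(0)$; the definition of strongly wobbly does supply a nilpotent one), your reverse inclusion is a genuinely different argument. The paper proceeds by producing, via \cite[Lemma 2.4]{PP}, a curve in $C_i^s$ through the boundary point $(\Ee,0)$, invoking an \'etale-local universal family near the stable point $\Ee$ (Section \ref{sec:univ_bundles} together with Lemma \ref{lm:no_nilp_par_end}) and then contradicting openness of the very stable locus \cite[Proposition 3.5]{L}: nearby members of the family carry non-zero nilpotent Higgs fields, so their underlying bundles cannot all be very stable. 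Your argument instead exploits conicality and dimension: from $h_{\Ee,st}^{-1}(0)=\{0\}$ and upper semicontinuity of fibre dimension you force the generic fibre of $C_i\cap T^*\N(\Pp,\alpha)^s\to\N(\Pp,\alpha)$ to be a zero-dimensional $\CC^\times$-invariant subset of a cotangent fibre, hence the origin, whence $C_i$ and $C_0$ coincide --- contradicting their being distinct components. What your route buys is the avoidance of local universal families and of openness of very stability altogether; what it costs is the input $\dim C_i\geq\dim\N(\Pp,\alpha)$, which needs $\dim\Ms=2\dim\N(\Pp,\alpha)$ and the containment of $h_{\Pp,\alpha,st}(\Ms)$ in an affine space of dimension $\dim\N(\Pp,\alpha)$ \cite[Theorem 36]{BK} (equivalently, Laumon's Lagrangianity of the nilpotent cone); both facts are already established and used elsewhere in the paper (Section \ref{sec:Poisson}, Corollary \ref{cor:lagrangian_multi}), so the substitution is legitimate.
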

\begin{proof}
The inclusion $\subset$ is clear, as if $\Ee\in \Wo$, let $\varphi$ be a strongly parabolic Higgs field on $\Ee$, then $(\Ee,\varphi)\in C_i$ for some $i\neq 0$. Hence $(\Ee,0)=\lim_{t\to 0}(E,t\varphi)\in C_i\cap C_0^s$ for some $i\neq 0$.

For the converse, assume $(\Ee,0)\in C_i\cap C_0^s$. Then $(\Ee,0)$ is a boundary point of $C_i^s\cap\M(\Pp,\alpha)^0$ inside $\M(\Pp,\alpha)^0$. So by \cite[Lemma 2.4]{PP}, and stability of $\Ee$, we may find a smooth curve $\psi:Z\longrightarrow C_i^s$ such that $\psi\left(Z\setminus\{z_0\}\right)\subset C_i^s\setminus \partial C_i$ and $\psi(z_0)=(\Ee,0)$. Now, by stability, one may consider the \'etale local family $(\mathbf{E},\mathbf{\Phi})$  (whose existence follows from Luna's slice theorem together with Lemma \ref{lm:no_nilp_par_end} and the arguments in Section \ref{sec:univ_bundles}). If $\Ee$ were strongly very stable, the generic point of this family would have to be very stable (by openness of very stability \cite[Proposition 3.5]{L}), contradicting the fact that $(\mathbf{E},\mathbf{\Phi})_z\in C_i\setminus C_0$ for $z\neq z_0$.
\end{proof}

\section{Shaky bundles are wobbly. The smooth case}\label{sec:W=S}

In this section we will assume that the parabolic weights are generic. In particular $\M(\Pp,\alpha)$ is smooth, so that all semistable parabolic (Higgs) bundles are stable; moreover, over the open subset $\M(\Pp,\alpha)^0\subset \M(\Pp,\alpha)$ of pairs with underlying stable parabolic bundle there exists a universal bundle (see Section \ref{sec:univ_bundles}).

By openness of semistability and irreducibility of $\M(\Pp,\alpha)$, there exists a rational map
\begin{equation}\label{eq:rat_full}
r:\xymatrix{\M(\Pp,\alpha)\ar@{-->}[r]&\N(\Pp,\alpha)}
\end{equation}{}
given by forgetting the Higgs field.  Note that  \eqref{eq:rat_full}  restricts to  rational maps
	\begin{equation}\label{eq:rat_nilp}
	r_{n}:\xymatrix{\M(\Pp,\alpha)_{nilp}\ar@{-->}[r]&\N(\Pp,\alpha)},
	\end{equation}
		\begin{equation}\label{eq:rat_st}
	r_{s}:\xymatrix{\M(\Pp,\alpha)_{st}\ar@{-->}[r]&\N(\Pp,\alpha)}.
	\end{equation}
	All the rational maps above are surjective, as by stability there is an embedding $\N(\Pp,\alpha)\plonge \M(\Pp,\alpha)_{st}$ given by $\Ee\mapsto(\Ee,0)$, which is in fact the composition  $\N(\Pp,\alpha)\plonge T^*\N(\Pp,\alpha)\plonge\M(\Pp,\alpha)_{st}$ of the zero section and the natural embedding.

Let $\Un\subset\M(\Pp,\alpha)$ be the subset given by semistable parabolic Higgs bundles with unstable underlying parabolic bundle. We denote by $\Un_{nilp}:=\Un\cap\M(\Pp,\alpha)_{nilp}$ and $\Un_{st}:=\Un\cap\M(\Pp,\alpha)_{st}$. 
\begin{remark}\label{rk:Un}
Non emptyness of $\Uns$ (and thus of $\Unn$ and $\Un$) follows, e.g., from the fact that the strongly parabolic nilpotent cone is reducible by \cite{L} (cf. Section  \ref{sec:rational_weights}). Indeed, by \cite[\S8]{SNonComp}, given $(\Ee,\varphi)\in {h^{st}}^{-1}(0)$, then $\lim_{t\to\infty} t\cdot (\Ee,\varphi)$ exists and is a fixed point for the $\CC^\times$-action, hence inside $h^{-1}(0)$. Since taking limits at $0$ and $\infty$ defines Zariski locally trivial affine fibrations with strata of half the dimension, limits at $\infty$ must have  unstable underlying bundle (this argument is found in \cite[\S 1.1]{Heinloth}).  
\end{remark}{}
\begin{remark}
In the non parabolic case, letting $\M$ denote the moduli space of Higgs bundles and $\N$ that of vector bundles, there is an equality $\Un:=\M\setminus T^*\N$. This is not the case here, as $T^*\N$ consists exclusively of strongly parabolic bundles  \cite[Remark 5.1]{YokoInf}. When $\Pp$ consists only of Borel subgroups, then $\Unn:=\Mn\setminus T^*\N(\Pp,\alpha)$.
\end{remark}{}
Given the smoothness assumption, Hironaka's results on the elimination of indeterminacies \cite{H} ensure that a finite number of blowups resolve the morphism \eqref{eq:rat_full} (see the discussion following \cite[Question E]{H}), which yields
\begin{equation}\label{eq:res_rat_full}
\xymatrix{
\widehat{\M(\Pp,\alpha)}\ar[dr]^{\hat{r}}\ar[d]_{\pi}&\\
\M(\Pp,\alpha)\ar@{-->}^r[r]&\N(\Pp,\alpha)
}
\end{equation}
Let $\Ex=\pi^{-1}(\Un)$ be the exceptional divisor of $\hM$. Then, since $\Ms$ and $\Mn$ are closed, it follows from \cite[Corollary II.7.15]{Ha} that $\hMn:=\widehat{\M(\Pp,\alpha)}\times_{\M(\Pp,\alpha)}\Mn$ and $\hMs:=\widehat{\M(\Pp,\alpha)}\times_{\M(\Pp,\alpha)}\Ms$ are closed subschemes of $\hM$ consisting of a finite number of blowups along $\Unn$ and $\Uns$ respectively. Hence we have resolutions 
\begin{equation}\label{eq:res_rat_nilp}
\xymatrix{
\widehat{\M(\Pp,\alpha)}_{nilp}\ar[dr]^{\hat{r}_n}\ar[d]_{\pi_n}&\\
\M(\Pp,\alpha)_{nilp}\ar@{-->}_{r_n}[r]&\N(\Pp,\alpha),
}
\end{equation}
\begin{equation}\label{eq:res_rat_st}
\xymatrix{
\widehat{\M(\Pp,\alpha)}_{st}\ar[dr]^{\hat{r}_s}\ar[d]_{\pi_s}&\\
\M(\Pp,\alpha)_{st}\ar@{-->}_{r_s}[r]&\N(\Pp,\alpha).
}
\end{equation}
In particular, the exceptional divisors  $\Ex_{nilp}:=\pi_n^{-1}(\Unn)$ and $\Ex_{st}:=\pi_s^{-1}(\Uns)$ satisfy $\Ex_{nilp}=\Ex\cap\hMn$ and $\Ex_{st}=\Ex\cap\hMs$.

\begin{definition}\label{defi:shaky}
	A stable bundle $\Ee$ is called shaky (resp. nilpotently shaky and strongly nilpotently shaky) if it is in the image $\Sh$ (resp.  $\Shn$ and $\Shs$)  under $\hat{r}$ (resp. $\hrn$ and $\hrs$) of the exceptional divisor $\Ex\subset{\hM}$ (resp. $\Ex_{nilp}$ and $\Ex_{st}$).
\end{definition}

\begin{theorem}\label{thm:W=S}
There is an equality $\Wo=\Shn=\Sh$, where $\Wo\subset\N(\Pp,\alpha)$ denotes the wobbly locus, namely, those stable bundles with a non-zero parabolic nilpotent Higgs field.

Similarly, the strong wobbly locus $\Wo_{st}$ satisfies $\Wo_{st}=\Sh_{st}$. 
\end{theorem}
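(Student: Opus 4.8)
The plan is to prove $\Wo = \Shn = \Sh$ by a chain of inclusions, exploiting the resolution diagrams \eqref{eq:res_rat_full} and \eqref{eq:res_rat_nilp} together with the characterisation of wobbliness via non-properness of the Hitchin map (Corollary \ref{cor:W=S}) and the blowup description of $\hM$. First I would establish $\Wo \subseteq \Shn$. Take $\Ee \in \Wo$; by Corollary \ref{cor:W=S} the map $h_{\Ee,nilp}$ is not proper, so by the valuative criterion there is a DVR $R$ with fraction field $k$ and a morphism $\Spec(k) \to \Vn$ not extending to $\Spec(R) \to \Vn$, while by properness of $h_{\Pp,\alpha}$ the composite with $\M(\Pp,\alpha)_{nilp} \hookrightarrow \M(\Pp,\alpha)$ does extend, with closed point landing in $\oVn \setminus \Vn$. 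By Lemma \ref{lm:oVn_minus_Vn}\eqref{it:F_uns} the limiting Higgs bundle $(\Ff,\psi)$ has $\Ff$ unstable, i.e. $(\Ff,\psi) \in \Unn$. Lifting the $\Spec(R)$-arc through the blowup $\pi_n$ (using the valuative criterion of properness for $\pi_n$ and separatedness), we get an arc in $\hMn$ whose generic point avoids $\Ex_{nilp}$ and whose closed point lies over $(\Ff,\psi) \in \Unn$, hence in $\Ex_{nilp}$; applying $\hrn$ and using that $\hrn$ agrees with $r_n$ generically, the generic point maps to $\Ee$, so by continuity the closed point maps to $\Ee$ as well, giving $\Ee \in \hrn(\Ex_{nilp}) = \Shn$.

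Next I would show $\Sh \subseteq \Wo$ (equivalently $\N(\Pp,\alpha)^s \setminus \Wo \subseteq \N(\Pp,\alpha)^s \setminus \Sh$, i.e. very stable bundles are not shaky in this strong sense). Suppose $\Ee$ is very stable. By Proposition \ref{prop:vs_iff_proper}, $\VE \hookrightarrow \M(\Pp,\alpha)$ is proper, so $\VE$ is closed in $\M(\Pp,\alpha)$ and in particular $\VE \cap \Un = \emptyset$ — no Higgs field on $\Ee$ has unstable underlying bundle other than... more precisely, $\Ee$ itself is stable so the fibre $r^{-1}(\Ee) = \VE$ does not meet $\Un$. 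The key point is then that $\hat r$ is an isomorphism over a neighbourhood of $r^{-1}(\Ee)$ in $\M(\Pp,\alpha) \setminus \Un$: since $\pi$ is an isomorphism away from $\Un = \pi(\Ex)$ and $r = \hat r \circ \pi^{-1}$ there, the preimage $\hat r^{-1}(\Ee)$ is contained in $\pi^{-1}(\VE) = \pi^{-1}(\VE \setminus \Un) \subseteq \hM \setminus \Ex$, so $\hat r^{-1}(\Ee) \cap \Ex = \emptyset$, whence $\Ee \notin \hat r(\Ex) = \Sh$. The same argument with $\hrn$ and $\Ex_{nilp}$ (using properness of $h_{\Ee,nilp}$ instead) gives $\Ee \notin \Shn$.

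Finally the inclusion $\Shn \subseteq \Sh$ is essentially formal: since $\hMn = \hM \times_{\M(\Pp,\alpha)} \Mn$ with $\Ex_{nilp} = \Ex \cap \hMn$ and $\hrn = \hat r|_{\hMn}$, we have $\Shn = \hrn(\Ex_{nilp}) = \hat r(\Ex \cap \hMn) \subseteq \hat r(\Ex) = \Sh$. Chaining $\Wo \subseteq \Shn \subseteq \Sh \subseteq \Wo$ closes the loop and proves $\Wo = \Shn = \Sh$. The strongly parabolic statement $\Wo_{st} = \Shs$ follows by the identical argument, replacing $\Vn$, $h_{\Ee,nilp}$, $\Mn$, $\hMn$ by $\Vst$, $h_{\Ee,st}$, $\Ms$, $\hMs$, and invoking Proposition \ref{prop:st_vs_iff_proper} in place of Proposition \ref{prop:vs_iff_proper}; there is no need for a chain of three sets since one directly gets $\Wo_{st} \subseteq \Shs$ from the valuative-criterion/limit argument (Lemma \ref{lm:oVn_minus_Vn} again guarantees the limit point has unstable underlying bundle) and $\Shs \subseteq \Wo_{st}$ from the isomorphism-away-from-$\Uns$ argument. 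The main obstacle I anticipate is the careful bookkeeping in the forward direction: ensuring the arc in $\M(\Pp,\alpha)$ produced by non-properness can be lifted to $\hM$ with its closed point genuinely landing \emph{on} $\Ex$ (not merely over $\Un$ set-theoretically in some degenerate way) and that $\hrn$ of that closed point is exactly $\Ee$ — this requires the continuity/separatedness argument to be run with some care, precisely as in \cite[\S 4]{PP}, and the subtlety flagged in the Remark after Lemma \ref{lm:oVn_minus_Vn} about \'etale-local families must be avoided by working with arcs into the moduli space directly.
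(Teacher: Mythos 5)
Your inclusion $\Wo\subseteq\Shn$ follows the paper's argument essentially verbatim (non-properness of $\Vn\hookrightarrow\M(\Pp,\alpha)$, a DVR arc with closed point in $\oVn\setminus\Vn$, Lemma \ref{lm:oVn_minus_Vn}, lifting through $\pi_n$, separatedness of $\N(\Pp,\alpha)$), and the inclusion $\Shn\subseteq\Sh$ is indeed formal. The gap is in the direction $\Sh\subseteq\Wo$. You argue that if $\Ee$ is very stable then $\VE$ is closed in $\M(\Pp,\alpha)$, misses $\Un$, and therefore $\hat r^{-1}(\Ee)\subseteq\pi^{-1}(\VE)\subseteq\hM\setminus\Ex$. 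But the inclusion $\hat r^{-1}(\Ee)\subseteq\pi^{-1}(\VE)$ is exactly what is not known: it holds on $\hM\setminus\Ex$, where $\hat r=r\circ\pi$, but on $\Ex$ the map $\hat r$ is not computed by $r$ (which is undefined on $\Un=\pi(\Ex)$), so nothing prevents a point $S\in\Ex$ with $\pi(S)\in\Un$ (hence $\pi(S)\notin\VE$) from satisfying $\hat r(S)=\Ee$. Properness of $\VE\hookrightarrow\M(\Pp,\alpha)$ only tells you that $\pi^{-1}(\VE)$ is already closed, i.e.\ that the part of the fibre $\hat r^{-1}(\Ee)$ lying outside $\Ex$ accumulates nowhere on $\Ex$; it does not exclude components of that fibre contained entirely in $\Ex$. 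Determining which stable bundles are hit by $\hat r(\Ex)$ is the actual content of this direction, and your argument assumes the answer.

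The paper's proof of $\Sh\subseteq\Wo$ is correspondingly more involved: given $s=\hat r(S)$ with $S\in\Ex$, it chooses a curve $j:Y\to\hM$ through $S$ meeting $\Ex$ only at $y_0$, forms the rational map $m(t,y)=t\cdot\pi(j(y))$ on $\CC\times Y$, shows that $\lim_{t\to 0}m(t,y_0)$ lies in $\Un$, resolves $m$ by blowups, and observes that the exceptional locus over $(0,y_0)$ is a connected union of rational curves inside the nilpotent cone joining a point of $\Un$ to $(\Ee,0)\in\N(\Pp,\alpha)$. An irreducible component $I$ meeting both $\N(\Pp,\alpha)$ and its complement cannot have its trace on $\N(\Pp,\alpha)$ meet the very stable locus, by density of very stable bundles \cite[Proposition 3.5]{L} together with irreducibility of $I$; hence $s\in\Wo$. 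To close your gap you would need to supply an argument of this kind (or establish connectedness of the fibres of $\hat r$, which is not available and would itself require work). The same issue affects your treatment of $\Shs\subseteq\Wo_{st}$.
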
{}
\begin{proof}
We will prove that $\Wo\subset\Shn$ and that $\Sh\subset\Wo$. This proves the first statement, as $\Shn\subset\Sh$.

Let us first check that $\Wo\subset\Shn$. By Theorem \ref{thm:vs_iff_proper}, if $\Ee\in\Wo$, then $\Vn\subset\M(\Pp,\alpha)$ is not proper. So there exists a discrete valuation ring $R$ and a morphism 
$$
\iota_F:C:=\Spec(R)\longrightarrow \M(\Pp,\alpha)
$$
such that the generic point $\Spec(k)$ (where $k$ is the fraction field of $R$) maps to $\VE$ while the closed point $o$ goes to $(\Ff,\psi)\in \oVn\setminus{\Vn}$ (cf.  \cite[\href{https://stacks.math.columbia.edu/tag/0A24}{Tag 0A24}]{stacks-project}). By Lemma \ref{lm:oVn_minus_Vn}, $\Ff$ is unstable. 

Now, $\iota_F(C\setminus{\{o\}})=\iota_F(\Spec(k))$ can be seen as a subset of $\hMn$. More precisely, there is a commutative diagram
$$
\xymatrix{
\Spec(k)\ar[d]\ar[r]&\hMn\ar[d]_{\pi_n}\\
\Spec(R)\ar[r]_{\iota}\ar@{-->}[ur]^{\hat{\iota}}&\Mn.
}
$$
Properness of $\pi_n$ implies that $\hat{\iota}$ exists. Clearly, $\hat{\iota}(o)$ belongs to $\Ex_{nilp}$, and so $\hat{r}_n(\hat{\iota}(o))\in\Shn$. Consider the curve
$$
\hat{r}_n\circ\hat{\iota}:C\longrightarrow \N(\Pp,\alpha).
$$
Since $\iota(C\setminus\{o\})\subset \VE$, then $\hat{r}_n\circ\hat{\iota}(C\setminus\{o\})=\{\Ee\}$. Separability of $\N(\Pp,\alpha)$ implies that $\hat{r}_n\circ\hat{\iota}(o)=\Ee$, and hence $\Ee\in\Shn$.

For the converse: let $s\in\Sh$. Then, for some $S\in\Ex\subset\hM$, $s=\hat{r}(S)$. By \cite[Lemma 2.4]{PP}, one may find a smooth curve $Y$ and a morphism
$$
\xymatrix{j:Y\ar[r]&\hM\\
y\ar@{|->}[r]&(\Ee_y,\varphi_y)
}
$$
such that $j(Y\setminus\{y_0\})\subset\M(\Pp,\alpha)\setminus\Un$, and $j(y_0)=S$. Therefore
\begin{equation}
    \hat{r}\circ j(Y\setminus\{y_0\})\subset\N(\Pp,\alpha)\setminus \Sh, \qquad \hat{r}\circ j(y_0)=s.
\end{equation}{}
Consider now the rational map
$$
\xymatrix{
\CC\times Y\ar@{-->}[r]^m& \M(\Pp,\alpha)\\
(t,y)\ar@{|->}[r]&t\cdot \pi\circ j(y)
}$$
defined away from $(0,y_0)$. Note that 
\begin{enumerate}
    \item\label{it:curves} $m(t,Y\setminus\{y_0\})\subset\M(\Pp,\alpha)\setminus \Un$, as $t\cdot(\pi\circ j(y))=(\Ee_y,t\varphi_y)$, for all $y\neq y_0$, where $(\Ee_y,\varphi_y):=\pi\circ j(y))$.
    \item\label{it:curve_in_N_from_m} This implies $r(m(t,y))=r(m(0,y))=\hat{r}\circ j (y)$ for all $t\in \CC$ and all $y\neq y_0$.
\end{enumerate}
Now, by properness of $\N(\Pp,\alpha)$, it follows that ${r}\circ m(y_0)$ exists. By \eqref{it:curves} above, separatedness of $\N(\Pp,\alpha)$, and commutativity of \eqref{eq:res_rat_full}, ${r}\circ m(y_0)=s$.

  Moreover, by \eqref{it:curve_in_N_from_m}, $m(t,y_0)\in\Un$ for all $t\neq 0$.   Let $(\Ff_0,\psi_0)=\lim_{t\to 0}m(t,y_0)$. Assume that $\Ff_0$ were semistable. Then, so would be the bundle underlying $m(t,y_0)$ for generic $t\in\AA^1$  for generic $t$ \cite[Proposition 1.11]{Yoko}. But
  $m(t,y_0)=(\Ff,t\cdot \psi)$ has underlying unstable bundle, which yields a contradiction.
   Thus, it must be $(\Ff_0,\psi_0)\in\Un$.

Now, a finite number of blowups allows to resolve the morphism $m$ \cite{H}, yielding 
  $$
\hat{m}:\widehat{\CC\times Y}
\longrightarrow \M(\Pp,\alpha).
$$
The exceptional divisor over $(0,y_0)$ is a union of projective lines containing a curve joining $(\Ff_0,\psi_0)$ and $(\Ee,0)$ and contained inside $\Mn$ (as it is contained in $h_{\Pp,\alpha}^{-1}(0)\subset\Mn$). An irreducible  component $I$ of the exceptional divisor of $\widehat{\CC\times Y}$ must therefore intersect $\mathrm{N}(\Pp,\alpha)$ at $(\Ee,0)$, but also $h^{-1}(0)\setminus{\N(\Pp,\alpha)}$. We claim that  $I\cap\mathrm{N}(\Pp,\alpha)\subset \Wo$. Indeed, otherwise, it would intersect the very stable locus (as there are no strictly semistable bundles).  These are strongly very stable, which are a dense open set in $\N(\Pp,\alpha)$ by \cite[Proposition 3.5]{L}. Moreover, since $I\cap\M(\Pp,\alpha)^0$ is non empty, then it is dense, so one may conclude that bundles in $I$ with underlying strongly very stable bundle are dense. Thus, by irreducibility of $I$,   it would be $I\subset\N(\Pp,\alpha)$, contradicting the fact that $I$ intersects the complement of $\N(\Pp,\alpha)$.  In particular, $s\in I\cap\mathrm{N}(\Pp,\alpha)\subset \Wo$.

  To see that $\Wo_{st}=\Shs$, we note that the arguments adapt verbatim if we work in the strong nilpotent leaf.
\end{proof}{}

\begin{remark}
When the moduli space $\M(\Pp,\alpha)$ is not smooth,  Hironaka does not apply to the full moduli space. It however does to the reduced schemes underlying all Hitchin fibers. So working the analogue to Theorem \ref{thm:W=S} out requires a finer study of the Hitchin fibration, and the dependence of shakiness on the characteristic. Moreover, in order to compare all loci involved, strictly semitable points need to be tracked and discarded. 
\end{remark}{}
\section{Non-parabolic Higgs pairs}\label{sec:classical}
 This section shows that the results in the parabolic setup imply analogue ones in the usual (non parabolic) setup.
 \begin{definition} Let $D\subset X$ be a reduced divisor, possibly zero. A $D$-twisted Higgs pair is $(E,\varphi)$ where $E$ is a vector bundle and $\varphi\in H^0(X,\End(E)\otimes K(D))$. 
 \end{definition}
The moduli space $\M_D(n,d)$ of $D$-twisted Higgs pairs of rank $n$ and degree $d$ is a quasi-projective variety \cite{Nitin}. It is Poisson, with symplectic leaves given by the orbits of the residue of the Higgs field along the divisor \cite[Corollary 8.10]{Markman}, \cite[Theorem 4.7.5 ]{Bottacin}. Let $\N(n,d)$ denote the moduli space of vector bundles of rank $n$ and degree $d$.

Now, the notion of very stability is empty in the meromorphic setup, as the following result shows.
\begin{lemma}\label{lm:empty_D_vs}
    Every vector bundle has a nilpotent $D$-twisted Higgs field if $D\neq 0$.
\end{lemma}
\begin{proof}
     Indeed, by \cite[Theorem 0.1]{RT}, every semistable vector bundle is an extension
   \[
   F_0\hookrightarrow E\twoheadrightarrow F_1
   \] 
   with $0\leq d_1n_0-n_1d_0\leq n_0n_1(g-1)$ where $n_i=\rk(F_i)$, $d_i=\deg(F_i)$. But then, by Riemann--Roch, it follows that
   \[
   h^0(F_1^*F_0K(D))\geq n_0n_1\deg(D).
   \]
\end{proof}
\begin{remark}
    Thanks to C. Pauly for explaining the rank two case to me.
\end{remark}
Now, by Lemma \ref{lm:empty_D_vs}, it is possible to stratify the moduli space in terms of the existence of nilpotent Higgs bundles, with deep connections to stratification in terms of the Segre invariant. Motivated by this, we introduce the following notion.
\begin{definition}
Let $P\leq \GL(n,\CC)$ be a parabolic subgroup, $D$ a divisor. A vector bundle $E$ is called $(P,D)$-strongly very stable if it has no $D$-twisted nilpotent Higgs field inducing a reduction of the structure group to $P$ with Higgs field taking values in the nilpotent endomorphisms with respect to the parabolic subgroup.  
\end{definition}
\begin{lemma}\label{lm:D-vs_vs_par_vs}
  Let  $D$ be a reduced divisor, and let $\Pp=\{P\,:\, x\in D\}$, for some parabolic group $P$, so that all flags $\sigma$ have the same underlying parabolic group.
 The quasi-parabolic bundle $\Ee=(E,\sigma)$ is strongly very stable  if and only if $E$ is $(P,D)$-strongly very stable. In particular, if $P=\GL(n,\CC)$,  $\Ee$ is strongly very stable if and only if $E$ is very stable.
\end{lemma}
\begin{proof} The first statement is a tautological remark. 

The last statement follows easily from the fact that $H^0(X,\PEnd(\Ee)\otimes K(D))_{nilp}=H^0(X,\End(E)\otimes K(D))_{nilp}\supset H^0(X,\End(E)\otimes K)=H^0(X,\SPEnd(\Ee)\otimes K(D))$ and preservation of nilpotency under inclusion given the existence of a commutative  diagram 
$$
\xymatrix{
	H^0(X,\End(E)\otimes K)\ar@{^(->}[r]\ar[d]_h&H^0(X,\PEnd(\Ee)\otimes K(D))_{nilp}\ar[d]^{h_{\Ee,nilp}}\\
	\H_{0}\ar@{^(->}[r]&\H_D}.
$$
\end{proof}
As corollaries to Lemma \ref{lm:D-vs_vs_par_vs} we have the following.
\begin{corollary}\label{cor:vs_st_D}
    With the notation of Lemma \ref{lm:D-vs_vs_par_vs}, a $(P,D)$-strongly very stable $E$ bundle is stable.
\end{corollary}
\begin{proof}
Let  $\Pp=\{P\,:\, x\in D\}$.
Taking $\alpha=0$, we note that the moduli space $\N(\Pp,0)$ surjects  onto the moduli space of vector bundles $\N(n,d)$ (where $d$ equals the parabolic degree).  

Thus, the statement follows from Lemmas \ref{lm:D-vs_vs_par_vs} and  \ref{lm:vs_is_stable}.
\end{proof}
\begin{corollary}\label{cor:usual_criterion}
	Let $E$ be a stable vector bundle. Let $D\subset X$ be a reduced divisor, possibly zero, and let $\Pp=\{P\,:\, x\in D\}$ if $D\neq 0$, and $\Pp=\{\GL(n,\CC) \}$ if $D=0$. Let $\V_{E,D,n}:=H^0(X,\End(E)\otimes K(D))_{\mathfrak{n}}\subset H^0(X,\End(E)\otimes K(D))$ be the subset with residue in $\mathfrak{n}$. Consider the restrictions of the Hitchin map $h_{E,D,n}:=h|_{V_{E,D,n}}$.
	Then:
	$$
	\begin{array}{lcl}
	E \textnormal{ \it is  $(P,D)$-strongly very stable } &\iff&  h_{E,D,n} \textnormal{ \it is finite }\\
	&\iff& h_{E,D,n} \textnormal{ \it is quasi-finite }\\
	&\iff&  h_{E,D,n} \textnormal{ \it is proper }\\
	&\iff&  V_{E,D,n}\hookrightarrow \M_D(n,d) \textnormal{ \it is proper .}
	\end{array}
	$$
\end{corollary}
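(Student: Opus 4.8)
The plan is to deduce the corollary from Propositions~\ref{prop:vs_iff_proper} and~\ref{prop:st_vs_iff_proper} by regarding a $D$-twisted Higgs pair as a parabolic Higgs bundle carrying the \emph{trivial} parabolic structure, so that no new argument is needed beyond translating the parabolic statements into the non-parabolic language.

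Assume first $D\neq 0$ and take $\Pp=\{P_x=\GL(n,\CC):x\in D\}$, i.e.\ the flag at each $x\in D$ is just $0\subset E_x$, together with, say, zero weights $\alpha$. For this flag type $\PEnd(\Ee)=\End(E)$ and $\SPEnd(\Ee)=\End(E)(-D)$; the parabolic $\alpha$-(semi)stability inequality $\parmu(\Ff)\le\parmu(\Ee)$ reduces termwise to ordinary (semi)stability, since the weight contributions cancel, and hence $\M(\Pp,\alpha)=\M_D(n,d)$ compatibly with the Hitchin maps. Consequently, viewing a vector bundle $E$ as such a quasi-parabolic bundle $\Ee$, we obtain $\VE=H^0(X,\End(E)\otimes K(D))=\V_{E,D}$ and $h_\Ee=h_{E,D}$; moreover ``nilpotent residue along the trivial flag'' means exactly ``nilpotent residue at $D$'', so $\Vn=\V_{E,D,n}$ and $h_{\Ee,nilp}=h_{E,D,n}$. (The identification $\Vn=\V_{E,D,n}$ is where one uses the description, obtained in the proof of Proposition~\ref{prop:image_V_nilp}, of $\Vn$ as $\ol{\MO}\cap\VE$ for the maximal nilpotent orbit $\Oo$, i.e.\ as the whole nilpotent-residue locus.) Finally, by Remark~\ref{rk:vs_indep_weight} very stability depends only on the quasi-parabolic structure and is equivalent to the absence of non-zero Higgs fields with nilpotent residue; thus $E$ is $D$-very stable if and only if $\Ee$ is very stable. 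Feeding these dictionaries into Proposition~\ref{prop:vs_iff_proper} applied to the stable parabolic bundle $\Ee$ reproduces exactly the chain of equivalences in the corollary. The stability assumption on $E$ is used only in the directions starting from a finiteness/properness hypothesis; in the other direction Lemma~\ref{lm:vs_is_stable} (applied to the trivial flag type) already shows that a $D$-very stable bundle is $\alpha$-stable, hence stable.

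For $D=0$ the space $\M_0(n,d)=\M(n,d)$ is the ordinary Higgs moduli space and the statement is the non-parabolic result underlying \cite{PP,Hacen}; it likewise follows from Proposition~\ref{prop:st_vs_iff_proper}. Indeed, choosing any non-empty reduced divisor $D'$ with the trivial flag type, one has $\SPEnd(\Ee)\otimes K(D')=\End(E)\otimes K$, so that the space $\Vst$ of strongly parabolic Higgs fields equals $H^0(X,\End(E)\otimes K)=\V_{E,0}$ and $h_{\Ee,st}$ is identified with $h_{E,0}$ (up to the closed embedding $\H\hookrightarrow\H_{D'}$); moreover ``$\Ee$ strongly very stable'' is precisely ``$E$ very stable''. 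Proposition~\ref{prop:st_vs_iff_proper} then gives the equivalence of very stability with finiteness, quasi-finiteness and properness of $h_{E,0}$, while $h_{E,0,n}$, being the restriction of $h_{E,0}$ to the cone $\V_{E,0,n}$, inherits each of these properties exactly when $\V_{E,0,n}=\{0\}$, i.e.\ exactly when $E$ is very stable.

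The mathematical content is entirely imported — Lemma~\ref{lm:Hacen} together with Propositions~\ref{prop:image_V_nilp} and~\ref{prop:vs_iff_proper}. What genuinely has to be checked is the bookkeeping of the reduction: that the trivial-flag parabolic structure records $D$-twisted Higgs pairs faithfully (moduli space, Hitchin map, stability and the notions of very stability and nilpotence all matching up), and in particular that the parabolic locus $\Vn$ is the nilpotent-residue space $\V_{E,D,n}$. I expect this matching, not any new idea, to be the only delicate point.
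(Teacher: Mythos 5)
Your proposal is correct and follows essentially the same route as the paper: take the trivial flag type $\Pp=\{\GL(n,\CC):x\in D\}$ with $\alpha=0$, identify $\PEnd(\Ee)=\End(E)$ and $\SPEnd(\Ee)\otimes K(D)=\End(E)\otimes K$ (so that nilpotent residue along the trivial flag is nilpotent residue at $D$), and then invoke Proposition~\ref{prop:vs_iff_proper} for $D\neq 0$ and Proposition~\ref{prop:st_vs_iff_proper} for $D=0$. Your extra remarks (the explicit check that $\Vn=\V_{E,D,n}$, and the observation that $h_{E,0,n}$ is the zero map on a cone) only make explicit bookkeeping that the paper leaves implicit.
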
{}
\begin{remark}
    If $P=\GL(n,\CC)$ $\V_{E,D,n}=\V_{E,D,0}=H^0(X,\End(E)\otimes K)$, regardless of the emptiness of the divisor.
\end{remark}
\begin{proof}
$E$ is stable if and only if $\Ee$ is stable. Then,
 by Lemma \ref{lm:D-vs_vs_par_vs}, $E$ is  $(P,D)$-strongly very stable if and only if $\Ee\in\Nn(\Pp,0)$ is strongly very stable. By Theorem \ref{thm:st_vs_iff_proper}, this is equivalent to $\V_{\Ee,st}\subset \M(\Pp,0)_{st}$ being proper. Now, since $\alpha=0$, there exist a map $\pi:\M(\Pp,0)_{st}\longrightarrow\M_D(n,d)$ which is proper. Thus, its restriction to $\V_{\Ee,st}\cong\V_{E,D,n}$ is also proper. 
 Conversely, assume  $\V_{E,D,n}\hookrightarrow\M_D(n,d)$ were proper.
Let $R$ be a discrete valuation ring with fraction field $k$. We want to prove the existence od the dashed arrow in the commutative diagram \[
\xymatrix{
\Spec(k)\ar[d]\ar[r]&\V_{\Ee,st}\ar[d]_i\\
\Spec(R)\ar[r]\ar@{-->}[ur]^{\hat{\iota}}&\M(\Pp,0).
}
\]
Since there exists a commutative diagram 
 \[
\xymatrix{
\V_{\Ee,st}\ar[r]\ar[d]^{\cong}&\M(\Pp, 0)\ar[d]\\
\V_{E,D, n}\ar[r]&\M_D(n,d),
}
 \]
further composing $\V_{\Ee,st}\subset\M(\Pp,0)$ with $\pi:\M(\Pp,0)\longrightarrow\M_D(n,d)$, by properness of $\V_{E,D,n}\cong \V_{\Ee,st}\subset\M_D(n,d)$ we obtain the existence of 
\[
\xymatrix{
\Spec(k)\ar[d]\ar[r]&\V_{\Ee,st}\ar[d]_i\\
\Spec(R)\ar[dr]\ar[r]\ar@{-->}[ur]^{\hat{\iota}}&\M(\Pp,0)\ar[d]\\
&M_D(n,d).
}
\]
 By commutativity, this factorisation proves the valuative criterion for properness for $\V_{\Ee,st}\longrightarrow \M(\Pp,0)$. Then, by Theorem \ref{thm:st_vs_iff_proper}, $\Ee$ is strongly very stable, which in turn is equivalent to $E$ being $(P,D)$-strongly very stable. This proves the equivalence between $(P,D)$-strong very stability and properness of $\V_{E,D, n}\longrightarrow \M_D(n,d)$. The proof of the equivalence of the latter and quasi-finiteness, properness and finiteness of $h_{E,D,n}$ follows as in the proof of Theorem \ref{thm:st_vs_iff_proper}. 
\end{proof}
\begin{remark}
	When $D=0$, Corollary \ref{cor:usual_criterion} is \cite[Theorem 1.1]{PP}. 
\end{remark}
Denote by $\N(n,d)^{vs}$ the very stable locus, and let $\mathbf{W}:=\N(n,d)^s\setminus \N(n,d)^{vs}$ be the wobbly locus. 
From Lemma \ref{lm:wobbly}, making $\alpha=0$, one gets
\begin{corollary}
	Let $C_i\, i\in I$ be the irreducible components of the nilpotent cone of $\M(n,d)$, with $C_0=\N(n,d)$. Then,  $\mathbf{W}=\bigcup_{i\in I\setminus 0}C_i\cap C_0^s$.
\end{corollary}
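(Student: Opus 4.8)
The plan is to deduce this directly from Lemma \ref{lm:wobbly} by specialising to the trivial parabolic structure, in the same spirit as Corollary \ref{cor:usual_criterion} was deduced from Propositions \ref{prop:st_vs_iff_proper} and \ref{prop:vs_iff_proper}. Concretely, I would take $\Pp=\{\GL(n,\CC):x\in D\}$, so that all flags are trivial, and set the weight $\alpha=0$. Since $0$ is rational, the identification of the stack of strongly parabolic bundles with bundles on an orbicurve (Section \ref{sec:rational_weights}) is available, and every ingredient entering the proof of Lemma \ref{lm:wobbly}---in particular \cite[Propositions 3.5 and 3.8]{L} and \cite[Lemma 2.4]{PP}---applies unchanged.

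Next I would record the relevant identifications, all of which already appear in the proof of Corollary \ref{cor:usual_criterion}: $\N(\Pp,0)\cong\N(n,d)$ and $\M(\Pp,0)_{st}\cong\M(n,d)$, under which the strongly parabolic Hitchin map $h_{\Pp,0,st}$ becomes the classical Hitchin map $h$ on $\M(n,d)$. Hence $h_{\Pp,0,st}^{-1}(0)$ is identified with the nilpotent cone of $\M(n,d)$, its irreducible components are the $C_i$ of the statement, and the distinguished component $C_0=\N(\Pp,0)$ supplied by \cite[Proposition 3.8]{L} becomes $C_0=\N(n,d)$ embedded by the zero section; accordingly the locus $C_0^s$ in the sense preceding Lemma \ref{lm:wobbly} matches the $C_0^s$ in the statement. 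Finally, for $\Ee=(E,\sigma)$ with trivial flags one has $\SPEnd(\Ee)\otimes K(D)=\End(E)\otimes K$, so $\Ee$ is strongly very stable exactly when $E$ is very stable in the sense of Section \ref{sec:classical}; thus the strong wobbly locus $\Wo_{st}\subset\N(\Pp,0)$ corresponds precisely to $\mathbf{W}_0\subset\N(n,d)$. Granting these translations, the corollary is a verbatim restatement of Lemma \ref{lm:wobbly}: $\mathbf{W}_0=\Wo_{st}=\bigcup_{i\neq 0}C_i\cap C_0^s$.

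I do not expect a genuine obstacle: the only point deserving a word is that Lemma \ref{lm:wobbly} is phrased with a fixed choice of parabolic weight, and one must check that setting $\alpha=0$---rather than a generic weight---causes no trouble. It does not, because $\alpha=0$ is rational, the zero section still meets $h^{-1}(0)$ in an irreducible component, and both openness of (strong) very stability and the curve-lifting argument used in the proof of Lemma \ref{lm:wobbly} hold regardless of genericity. In short, the trivial parabolic case \emph{is} the classical case, so the statement transfers with only bookkeeping.
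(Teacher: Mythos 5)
Your proposal is correct and follows exactly the paper's route: the paper deduces this corollary from Lemma \ref{lm:wobbly} by setting $\alpha=0$ with trivial flags, which is precisely the specialisation and bookkeeping you carry out (the paper simply states it in one line, while you spell out the identifications $\N(\Pp,0)\cong\N(n,d)$, $\M(\Pp,0)_{st}\cong\M(n,d)$, and $\Wo_{st}=\mathbf{W}_0$ explicitly).
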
{}

Likewise, $\hrs$ in\eqref{eq:res_rat_st} provides a resolution (when $(n,d)=1$) of the rational map
\begin{equation}\label{eq:res_usual}
\xymatrix{r:\M(n,d)\ar@{-->}[r]&\N(n,d)}.
\end{equation}
Then, $\Shs$  from Definition \ref{defi:shaky} is identified with a certain locus in  $\N(n, d)$ that we will denote by $\mathbf{S}$, image of the exceptional divisor over the locus $\mathbf{Un}\subset\M(n,d)$ of Higgs bundles with underlying unstable bundle. 

Finally, we also obtain the equivalent of Theorem \ref{thm:W=S}.  Assume that $(n,d)=1$ (equivalently, the weight $0$ is generic). 
\begin{theorem}\label{thm:usual_W=S}
There is an equality $\mathbf{W}=\mathbf{S}$.
\end{theorem}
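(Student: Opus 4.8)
The plan is to deduce this from Theorem~\ref{thm:W=S} by specialising the parabolic picture of Section~\ref{sec:W=S} to the trivial parabolic structure, in the same spirit as the proof of Corollary~\ref{cor:usual_criterion}. First I would take $\Pp=\{\GL(n,\C):x\in D\}$, so that all flags are trivial, together with the weight $\alpha=0$. The hypothesis $(n,d)=1$ is precisely the statement that $0$ is a generic weight: a subsheaf $F$ of slope $d/n$ would have $n\mid\rk F$ since $(n,d)=1$, which is impossible for $0<\rk F<n$, so there are no strictly $0$-semistable parabolic (Higgs) bundles and $\M(\Pp,0)$ is smooth. Hence the standing assumptions of Section~\ref{sec:W=S} are satisfied; in particular Theorem~\ref{thm:W=S} applies to $(\Pp,0)$, and the universal bundle over $\M(\Pp,0)^0$ needed for the blowup constructions exists (Section~\ref{sec:univ_bundles}).

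The substance of the proof is to set up the dictionary between the $(\Pp,0)$-objects and the classical ones and to check that the resolutions of Section~\ref{sec:W=S} specialise to resolutions of the rational maps in \eqref{eq:res_usual}. As computed in the proof of Corollary~\ref{cor:usual_criterion}, with trivial flags $\SPEnd(\Ee)\otimes K(D)=\End(E)\otimes K$ and $\PEnd(\Ee)\otimes K(D)=\End(E)\otimes K(D)$, giving $\N(\Pp,0)\cong\N(n,d)$, $\M(\Pp,0)_{st}\cong\M(n,d)$ and $\M(\Pp,0)_{nilp}\cong\M_D(n,d)_{nilp}$. These isomorphisms intertwine the rational maps $r_s,r_n$ of \eqref{eq:rat_st} and \eqref{eq:rat_nilp} with $r_0,r_D$ of \eqref{eq:res_usual}, and carry $\Uns$ and $\Unn$ to $\mathbf{Un}_0\subset\M(n,d)$ and $\mathbf{Un}_D\subset\M_D(n,d)_{nilp}$. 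I would then argue that the blowups \eqref{eq:res_rat_st} and \eqref{eq:res_rat_nilp} are exactly the resolutions of $r_0$ and $r_D$ by blowup along these loci, so that the exceptional divisors $\Ex_{st},\Ex_{nilp}$ correspond to the exceptional divisors over $\mathbf{Un}_0,\mathbf{Un}_D$ and hence $\Shs,\Shn$ of Definition~\ref{defi:shaky} are identified with $\mathbf{S}_0,\mathbf{S}_D$ under $\N(\Pp,0)\cong\N(n,d)$. On the other side, again by the proof of Corollary~\ref{cor:usual_criterion}, strong very stability of $\Ee=(E,\sigma)$ amounts to $0$-very stability of $E$ and very stability of $\Ee$ to $D$-very stability of $E$, so the same isomorphism carries $\Wo_{st}$ to $\mathbf{W}_0$ and $\Wo$ to $\mathbf{W}_D$ when $D\neq0$.

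Having done this, I would simply invoke Theorem~\ref{thm:W=S} for $(\Pp,0)$: it gives $\Wo_{st}=\Shs$ and $\Wo=\Shn$, which transport through the identifications to $\mathbf{W}_0=\mathbf{S}_0$ (using the strong statement, which also covers the case $D=0$) and $\mathbf{W}_D=\mathbf{S}_D$ for $D\neq0$ (using the nilpotent statement). The main obstacle is the middle step: checking that the Hironaka resolution of \eqref{eq:rat_full} and its restrictions to the closed subschemes $\M(\Pp,0)_{st}$ and $\M(\Pp,0)_{nilp}$ genuinely restrict to the resolutions that \emph{define} $\mathbf{S}_0$ and $\mathbf{S}_D$ — i.e.\ that the exceptional divisors and their images survive the identifications — which in turn requires knowing that $\M(\Pp,0)_{nilp}\cong\M_D(n,d)_{nilp}$ is an isomorphism of schemes so that blowing up along $\Unn$ and along $\mathbf{Un}_D$ give the same thing. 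Everything else is the bookkeeping already carried out for Corollary~\ref{cor:usual_criterion}.
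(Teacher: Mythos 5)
Your proposal is correct and follows essentially the same route as the paper: the paper defines $\mathbf{S}_0$ and $\mathbf{S}_D$ precisely as the images of $\Shs$ and $\Shn$ under the identifications $\N(\Pp,0)\cong\N(n,d)$, $\M(\Pp,0)_{st}\cong\M(n,d)$, $\M(\Pp,0)_{nilp}\cong\M_D(n,d)_{nilp}$ set up in (the proof of) Corollary~\ref{cor:usual_criterion}, and then deduces the theorem directly from Theorem~\ref{thm:W=S}, exactly as you do. Your extra care about $(n,d)=1$ being equivalent to genericity of the weight $0$ and about the resolutions restricting along the closed subschemes matches the paper's (largely implicit) bookkeeping.
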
{}
\begin{remark}
Strictly speaking, from Theorem \ref{thm:W=S}, one can deduce that for any resolution of the rational map \eqref{eq:rat_st} obtained by restricting a resolution of \eqref{eq:rat_full}, the corresponding strongly shaky locus equals the strongly wobbly locus. We note that since we are only concerned with the images of the exceptional divisors, the same result will hold if instead of a restriction one considers arbitrary resolutions of \eqref{eq:rat_st} by successive blowups along $\mathbf{Un}$.
\end{remark}{}

\end{document}